\numberwithin{equation}{section}
\def \er{\varepsilon}
\def \ka{\varkappa}
\renewcommand{\l}{\left}
\renewcommand{\r}{\right}
\def \C{\mathbb{C}}
\def \N{\mathbb{N}}
\def \R{\mathbb{R}}
\def \A{\mathcal{A}}
\def \zbar {\bar{z}}
\newcommand{\beq}{\begin{equation}}
\newcommand{\eeq}{\end{equation}}
\newcommand{\one}{\mathbf{1}}
\def\im{\operatorname{Im}}
\newcommand{\eqdef}{\stackrel{\rm def}{=\kern-3.6pt=}}
\theoremstyle{plain}
\newtheorem{theorem}{\bf Theorem}[section]
\newtheorem{lemma}[theorem]{\bf Lemma}
\newtheorem{prop}[theorem]{\bf Proposition}
\theoremstyle{definition}
\newtheorem{defi}[theorem]{\bf Definition}
\theoremstyle{remark}
\newtheorem{remark}[theorem]{\bf Remark}
\renewcommand{\le}{\leqslant}
\renewcommand{\ge}{\geqslant}
\newcommand{\dist}{\mathop{\mathrm{dist}}\nolimits}
\renewcommand{\qed}{\vrule height7pt width5pt depth0pt}
\title{Polynomials of almost normal arguments in
 $C^*$-algebras}
\author{N. Filonov, I. Kachkovskiy\thanks{Steklov Institute, 
St. Petersburg, and King's College London. 
The first author was supported by RFBR Grant 11-01-00324-a. 
The second author was supported by King's Annual Fund Studentship 
and King's Overseas Research Studentship.}}
\date{}
\begin{document}
\maketitle
\begin{abstract}
The functional calculus for normal elements in $C^*$-algebras is 
an important tool of analysis. 
We consider polynomials $p(a,a^*)$ for elements $a$ with 
small self-commutator norm $\|[a,a^*]\| \le \delta$ and 
show that many properties of the functional calculus  
are retained modulo an error of order $\delta$.

{\it 2010 Mathematics Subject Classification:} 47A60, 46L05, 11E25.

{\it Keywords:} $C^*$-algebras, functional calculus, self-commutator,
polynomials, Positivstellensatz, pseudospectrum.
\end{abstract}

\section{Introduction}
Let $a$ be a {\it normal} element of a unital $C^*$-algebra $\A$.
It is well known that there exists a unique $C^*$-algebra homomorphism
$$
C(\sigma (a)) \to \A, \quad f\mapsto f(a)
$$
from the algebra of continuous functions on the spectrum $\sigma(a)$ into $\A$
such that $f(z) = z$ is mapped into $a$,
$\sigma(f(a)) = f(\sigma(a))$, and
\beq
\label{1}
\|f(a)\| = \max_{z\in\sigma (a)} |f(z)| 
\eeq
(see, for example, \cite{Dix}).
It is called the functional calculus for normal elements and is widely used in analysis.

The aim of the present paper is to introduce an analogue of  functional calculus for 
``almost normal'' elements. More precisely, we shall always be assuming that
\beq
\label{2}
\|a\| \le 1, \quad \|[a, a^*]\| \le \delta 
\eeq
with a small $\delta$.
We restrict the considered class of functions to polynomials 
in $z$ and $\zbar$ and show that some important properties of the functional calculus hold up to an error of order $\delta$.

If $aa^* \neq a^*a$ then the polynomials of $a$ and $a^*$ are, 
in general, not uniquely defined. 
We fix the following definition. For a polynomial 
\beq
\label{p_def}
p(z,\zbar)=\sum\limits_{k,l} p_{kl} z^k \zbar^l
\eeq
let
\beq
\label{pa_def}
p(a,a^*)=\sum\limits_{k,l} p_{kl} a^k (a^*)^l.
\eeq
It is clear that the map $p\mapsto p(a,a^*)$
is linear and involutive, that is
$\overline{p}(a,a^*)=p(a,a^*)^*$ where $\bar{p}(z,\zbar)=\sum\bar{p}_{lk}z^k \zbar^l$.
Using the inequality 
$\|[a, b^m]\| \le m \|b\|^{m-1} \|[a,b]\|$
and \eqref{2}, one can easily show that the map $p\mapsto p(a,a^*)$
is ``almost multiplicative'',
\beq 
\label{appr_mult}
\|p(a,a^*)q(a,a^*)-(pq)(a,a^*)\|\leqslant C(p,q)\,\delta
\eeq 
where
$$
C(p,q)=\sum\limits_{k,l,s,t} l s \l|p_{kl}\r|\l|q_{st}\r|.
$$
It takes much more effort to obtain an estimate of the norm $\|p(a,a^*)\|$.
In the case of an analytic polynomial $p(z) = \sum_k p_k z^k$, according to 
the von Neumann inequality, 
$$
\|p(a)\|\leqslant \max_{|z|\le 1}|p(z)| =: p_{\max}
$$
where it is only assumed that $\|a\| \le 1$ (see, for example, \cite[I.9]{SF}).

Our main results are as follows.
\begin{theorem}
\label{norm_th}
Let $p$ be a polynomial \eqref{p_def}. There exists a constant $C(p)$ 
such that the estimate 
\beq
\label{neum_gen}
\|p(a,a^*)\|\leqslant p_{\max}+C(p)\delta
\eeq
holds for all $a$ satisfying \eqref{2}.
Here $p(a,a^*)$ is defined by \eqref{pa_def}, and 
$p_{\max}=\max\limits_{|z|\leqslant 1}|p(z,\zbar)|$.
\end{theorem}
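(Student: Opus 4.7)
The plan is to reduce Theorem~\ref{norm_th} to a norm bound on a self-adjoint element built from a nonnegative polynomial, and then to invoke a Positivstellensatz on the closed unit disk together with the near-multiplicativity \eqref{appr_mult} to control that norm up to an $O(\delta)$ error.

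\textbf{Step 1 (Reduction to a self-adjoint polynomial).} Set $q(z,\bar z):=\bar p(z,\bar z)\,p(z,\bar z)=|p(z,\bar z)|^2$, so that $q_{\max}=p_{\max}^2$. Since $p(a,a^*)^*=\bar p(a,a^*)$, the estimate \eqref{appr_mult} gives
$$
\bigl\|\,p(a,a^*)^* p(a,a^*)-q(a,a^*)\,\bigr\|\le C_1(p)\,\delta,
$$
and the $C^*$-identity implies $\|p(a,a^*)\|^2\le\|q(a,a^*)\|+C_1(p)\,\delta$. The element $q(a,a^*)$ is self-adjoint, so it suffices to establish two-sided bounds of the form
$
-C(p)\,\delta\,\one\le q(a,a^*)\le(q_{\max}+C(p)\,\delta)\,\one.
$

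\textbf{Step 2 (Positivstellensatz and substitution).} Both $q$ and $q_{\max}-q$ are nonnegative on the closed unit disk. By a Positivstellensatz on the disk (e.g.\ Putinar's theorem applied to the archimedean quadratic module generated by $1-z\bar z$, after an $\eps$-thickening if needed) there is a decomposition
$$
q_{\max}+\eps-q(z,\bar z)\;=\;\sum_i \bar r_i(z,\bar z)\,r_i(z,\bar z)\;+\;(1-z\bar z)\sum_j \bar s_j(z,\bar z)\,s_j(z,\bar z),
$$
with polynomials $r_i,s_j$ depending on $p$ and $\eps$, and analogously for $q+\eps$. Substitute $a$, using the orderings $\bar r_i r_i\mapsto r_i(a,a^*)^*r_i(a,a^*)$ and $(1-z\bar z)\bar s_j s_j\mapsto s_j(a,a^*)^*(1-a^*a)s_j(a,a^*)$: both types of summand are manifestly $\ge0$ (because $\|a\|\le1$ makes $1-a^*a\ge0$), while by \eqref{appr_mult} the whole substitution costs only an error of norm $\le C_2(p,\eps)\,\delta$. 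This gives
$$
q(a,a^*)\le(q_{\max}+\eps)\,\one+C_2(p,\eps)\,\delta,\qquad q(a,a^*)\ge-(\eps+C_2(p,\eps)\,\delta)\,\one,
$$
hence $\|q(a,a^*)\|\le q_{\max}+\eps+C_2(p,\eps)\,\delta$. Combining with Step~1 and using $\sqrt{p_{\max}^2+K\delta}\le p_{\max}+K\delta/(2p_{\max})$ for small $\delta$ (the case $p_{\max}=0$ is trivial) delivers \eqref{neum_gen}.

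\textbf{Main obstacle.} The delicate point is Step~2: the coefficients of the SOS certificates $r_i,s_j$ typically blow up as $\eps\to0$, so one must either prove saturation of the disk's quadratic module (permitting $\eps=0$), or balance $\eps$ against $\delta$ while controlling the $\eps$-dependence of $C_2(p,\eps)$. Producing a Positivstellensatz in a form robust enough for the non-commutative substitution, and carefully bookkeeping the $\delta$-errors through \eqref{appr_mult} at each summand, is the main labour of the argument; the rest is $C^*$-algebraic housekeeping.
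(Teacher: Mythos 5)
Your overall strategy — reduce to an SOS certificate for $p_{\max}^2-|p|^2$ on the disk, substitute $a$, and control the non-commutative errors via \eqref{appr_mult} — is exactly the structure of the paper's argument (the substitution step is isolated there as Lemma~\ref{review}). However, you have correctly identified, and left unresolved, the crucial obstacle: the paper's Theorem~\ref{norm_th} asserts a bound of the clean form $p_{\max}+C(p)\delta$, and your $\eps$-thickened Putinar certificate only yields $p_{\max}+\eps+C(p,\eps)\delta$ for each $\eps>0$. The proposed fallback of ``balancing $\eps$ against $\delta$'' does not close the gap: if $C(p,\eps)$ blows up as $\eps\to0$ (which Putinar alone permits), then $\inf_\eps(\eps+C(p,\eps)\delta)$ is some $o(1)$ quantity but generically not $O(\delta)$, so you would only recover a statement of the weaker, Theorem~\ref{t34}-type form rather than \eqref{neum_gen}. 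Requiring $C(p,\eps)$ to stay bounded as $\eps\to0$ is precisely the saturation you mention as the alternative.

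What actually makes the clean $O(\delta)$ rate available is that for the single constraint $g_0=1-|z|^2$ (i.e.\ $m=1$) the quadratic module of the closed unit disk \emph{is} saturated: a polynomial merely \emph{non-negative} on the disk already admits the representation $\sum r_j^2+(1-|x|^2)\sum s_j^2$. The paper invokes this as Proposition~\ref{s_c}, which is Corollary~3.3 of Scheiderer~\cite{Sch} (sums of squares on real algebraic surfaces); with it, no $\eps$-thickening is needed and the proof goes through exactly as you sketched. Note also that this saturation is genuinely special to the disk: the paper's Theorem~\ref{counter} shows it fails for the multiply-connected sets $S$, which is why Theorem~\ref{t34} retains an $\eps$ whereas Theorem~\ref{norm_th} does not. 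A minor simplification to your Step~1: you do not need a separate certificate for $q+\eps$ to get the lower bound on $q(a,a^*)$, since $q(a,a^*)$ is within $O(\delta)$ of the manifestly positive $p(a,a^*)^*p(a,a^*)$; the paper works directly with $p(a,a^*)^*p(a,a^*)\le(p_{\max}^2+C\delta)\one$ and reads off the norm.
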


If $a$ is normal and $f$ is a continuous function then 
the functional calculus gives the following more precise estimate,
\beq
\label{norm}
\|f(a)\|=\max\limits_{z\in \sigma(a)}|f(z)|.
\eeq
If $a\in\A$ and $\lambda_j\not\in\sigma(a)$, $j=1,\ldots,m-1$, 
then there exists $R_j>0$ such that 
\beq
\label{a}
\|(a-\lambda_j)^{-1}\| \leqslant R_j^{-1},\mbox{ }j=1,\ldots,m-1.
\eeq
The following theorem gives an analogue of \eqref{norm} for an almost normal $a$.

\begin{theorem}
\label{t34}
Let $a\in\A$ satisfy \eqref{2} and \eqref{a}, and let the set
\beq
\label{s}
S=\{z\in\C\colon |z|\leqslant 1,\mbox{ }
|z-\lambda_j|\geqslant R_j,\mbox{ }j=1,\ldots,m-1\}
\eeq
be nonempty. 
For each $\er>0$ and each polynomial $p$ defined by \eqref{p_def} there exists a constant $C(p,\er)$ independent of $a$ such that
$$
\|p(a,a^*)\| \leqslant \max_{z\in S} |p(z,\zbar)| + \er + C(p,\er) \delta.
$$

\end{theorem}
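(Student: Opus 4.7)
The plan is to apply a Positivstellensatz to represent $(M+\er)^2-|p(z,\zbar)|^2$ on the compact set $S$, where $M:=\max_{z\in S}|p(z,\zbar)|$, and then to transfer that identity into the $C^*$-algebra using the almost-multiplicativity \eqref{appr_mult}, the inequality $\|a\|\le 1$, and the resolvent bound \eqref{a}.

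The set $S$ is compact semialgebraic, defined by $g_m:=1-|z|^2\ge 0$ and $g_j:=|z-\la_j|^2-R_j^2\ge 0$, $j=1,\ldots,m-1$, and on $S$ one has $(M+\er)^2-|p(z,\zbar)|^2\ge 2M\er+\er^2>0$. In the real coordinates $x=\re z$, $y=\im z$, the quadratic module generated by the $g_k$ contains $1-x^2-y^2$ and is therefore Archimedean, so Putinar's Positivstellensatz yields a representation
\beq
(M+\er)^2-|p(z,\zbar)|^2 = \si_0(z,\zbar) + \sum_{k}\si_k(z,\zbar)\,g_k(z,\zbar),
\eeq
in which each $\si_k$ is a finite sum of squares of real polynomials in $(x,y)$. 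Since real polynomials in $(x,y)$ coincide with Hermitian polynomials in $(z,\zbar)$ and the square of a Hermitian polynomial is a Hermitian square, we may rewrite $\si_k = \sum_\ell r_{k\ell}(z,\zbar)\,\overline{r_{k\ell}}(z,\zbar)$ with polynomials $r_{k\ell}$ depending only on $p$ and $\er$.

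Using commutativity of polynomial multiplication, I would rearrange each summand as $r_{k\ell}\cdot g_k\cdot\overline{r_{k\ell}}$ before substituting $a,a^*$. A bounded number of applications of \eqref{appr_mult} (with total constant depending only on $p$ and $\er$) then gives
$$
(M+\er)^2 I - p(a,a^*)\,p(a,a^*)^* = \sum_{k,\ell} r_{k\ell}(a,a^*)\,g_k(a,a^*)\,r_{k\ell}(a,a^*)^* + R,
$$
with $\|R\|\le C(p,\er)\,\de$. The operators $g_k(a,a^*)$ are non-negative: $g_m(a,a^*) = I-aa^*\ge 0$ because $\|aa^*\|=\|a\|^2\le 1$, and \eqref{a} is equivalent to $(a-\la_j)(a-\la_j)^*\ge R_j^2 I$, i.e.\ $g_j(a,a^*)\ge 0$. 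Each summand $r_{k\ell}(a,a^*)\,g_k(a,a^*)\,r_{k\ell}(a,a^*)^*$ is therefore $\ge 0$, whence $p(a,a^*)\,p(a,a^*)^*\le \l((M+\er)^2+C(p,\er)\de\r)I$. Passing to the $C^*$-norm via $\|T\|^2=\|TT^*\|$, and using $\sqrt{s^2+t}\le s+t/(2s)$ with $s=M+\er$, yields the required bound $\|p(a,a^*)\|\le M+\er+C'(p,\er)\,\de$.

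The main obstacle is the careful invocation of the Positivstellensatz together with the bookkeeping needed to transfer it to the noncommutative setting. Each sum-of-squares term must be grouped as $r\cdot g_k\cdot\overline{r}$ (and not as $r\cdot\overline{r}\cdot g_k$) before substitution, so that the resulting operator is manifestly of the form $r(a,a^*)\,g_k(a,a^*)\,r(a,a^*)^*$ with a positive middle factor; the alternative grouping would produce a product of two positive operators that need not be self-adjoint, and the $O(\de)$ error from \eqref{appr_mult} would not suffice to restore positivity. Once the correct ordering is used, all remaining steps are routine applications of the $C^*$-algebra axioms and \eqref{appr_mult}.
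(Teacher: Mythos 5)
Your argument is correct and essentially the same as the paper's: both invoke Putinar's Positivstellensatz for a polynomial strictly positive on $S$ (the paper uses $p_{\max}^2 + \er p_{\max} - |p|^2$, you use $(p_{\max}+\er)^2 - |p|^2$, an immaterial variation), transfer the resulting sum-of-squares identity to the $C^*$-algebra via the almost-multiplicativity bound \eqref{appr_mult} with the middle-factor grouping $r\,g_i\,\bar r$, and use $\|a\|\le 1$ together with \eqref{a} (Proposition \ref{l31}) to conclude $g_i(a,a^*)\ge 0$. The only organizational difference is that the paper isolates the transfer step as Lemma \ref{review}, which you reproduce inline.
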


Note that, under the conditions of Theorem \ref{t34}, the set $S$  is a unit disk with $m-1$ ``holes'' which contains $\sigma(a)$. 

Finally, assume again that $a$ is normal and $\mu \notin f(\sigma(a))$.
Then the functional calculus implies that the 
element $(f(a)-\mu)$ is invertible and
\beq
\label{5}
\left\| (f(a)-\mu)^{-1} \right\| = 
\frac{1}{\dist \left(\mu, f(\sigma(a))\right)}.
\eeq
The equality \eqref{5} also admits the following approximate analogue 
with $\sigma(a)$ replaced by $S$ and $f(\sigma(a))$ by $p(S)$, 
where $p(S)$ is the image of $S$ 
under $p$ considered as a map from $\C$ to $\C$.
\begin{theorem}
\label{main}
Let $S$ be defined by \eqref{s}, and let $p$ be a polynomial \eqref{p_def}.
Then for each $\varepsilon>0$ and $\ka>0$ there exist constants
$C(p,\ka,\er)$, $\delta_0(p,\ka,\er)$ 
such that for all $\delta<\delta_0(p,\ka,\er)$
and for all $\mu\in\C$ satisfying $\dist(\mu,p(S))\ge \ka$
the estimate
$$
\|(p(a,a^*) - \mu\one)^{-1}\| \leqslant \ka^{-1} + \varepsilon
+ C(p,\ka,\er)\delta
$$
holds for all $a\in\A$ satisfying \eqref{2} and \eqref{a}.
\end{theorem}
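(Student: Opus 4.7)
The plan is, for each admissible $\mu$, to construct a polynomial $q(z,\zbar)$ approximating $g_\mu(z,\zbar)\eqdef(p(z,\zbar)-\mu)^{-1}$ uniformly on $S$, and to use $q(a,a^*)$ as an approximate right inverse of $p(a,a^*)-\mu\one$, controlling the errors via almost multiplicativity \eqref{appr_mult} and Theorem \ref{t34}. Fix $\mu$ with $\dist(\mu,p(S))\ge\ka$; then $g_\mu$ is continuous on the compact set $S$ with $\max_S|g_\mu|\le\ka^{-1}$, and by Stone--Weierstrass (polynomials in $z,\zbar$ are dense in $C(S)$) for any $\eta>0$ there exists a polynomial $q$ with $\max_S|q-g_\mu|<\eta$; hence $\max_S|q|\le\ka^{-1}+\eta$ and $\max_S|q(p-\mu)-1|=\max_S|q-g_\mu|\cdot|p-\mu|\le\eta\cdot\max_S|p-\mu|$.

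The delicate point is that the constants $C(p,\ka,\er)$ and $\delta_0(p,\ka,\er)$ must be uniform in $\mu$, whereas the construction above gives a $\mu$-dependent $q$. I resolve this by splitting the range of $\mu$. For $|\mu|$ large, Theorem \ref{norm_th} yields $\|p(a,a^*)\|\le p_{\max}+C(p)\delta$, so choosing $R_0=R_0(p,\ka,\er)$ with $R_0-p_{\max}-1\ge\ka$ gives the direct resolvent estimate $\|(p(a,a^*)-\mu\one)^{-1}\|\le(|\mu|-\|p(a,a^*)\|)^{-1}\le\ka^{-1}$ whenever $|\mu|\ge R_0$ and $\delta$ is small. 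On the compact residual set $K_\ka=\{\mu:\dist(\mu,p(S))\ge\ka,\,|\mu|\le R_0\}$ the map $\mu\mapsto g_\mu$ is norm-continuous into $C(S)$, so a finite covering yields polynomials $q_1,\ldots,q_N$ (depending only on $p,\ka,\er$) such that each $\mu\in K_\ka$ admits some $q_i$ with $\max_S|q_i-g_\mu|<2\eta$. Because only finitely many $q_i$ appear and $\mu$ varies in a bounded set, all constants extracted below from \eqref{appr_mult} and Theorem \ref{t34} can be taken uniform in $\mu$.

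Fix $\mu\in K_\ka$, write $q=q_{i(\mu)}$ and set $s(z,\zbar)=q(z,\zbar)(p(z,\zbar)-\mu)-1$, so $\max_S|s|\le C_1(p,\ka)\eta$. Applying \eqref{appr_mult} to $q$ and $p-\mu$ gives $\|q(a,a^*)(p(a,a^*)-\mu\one)-\one-s(a,a^*)\|\le C_2\delta$, while Theorem \ref{t34} applied to $s$ gives $\|s(a,a^*)\|\le C_1(p,\ka)\eta+\eta+C_3\delta$. Consequently $\|q(a,a^*)(p(a,a^*)-\mu\one)-\one\|<1/2$ once $\eta$ and $\delta$ are small enough, and a Neumann series inverts $q(a,a^*)(p(a,a^*)-\mu\one)$ with norm-bound close to $1$. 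Writing
$$
(p(a,a^*)-\mu\one)^{-1}=\bigl[q(a,a^*)(p(a,a^*)-\mu\one)\bigr]^{-1}q(a,a^*)
$$
and using $\|q(a,a^*)\|\le\ka^{-1}+2\eta+\eta'+C_4\delta$ from one more application of Theorem \ref{t34}, I choose $\eta$ and $\eta'$ small (depending on $p,\ka,\er$) so that the $\eta$-contributions in the resulting ratio sum to at most $\er$, and then $\delta_0$ small; the remaining $\delta$-terms combine into the single term $C(p,\ka,\er)\delta$ of the statement.

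The main obstacle is precisely this uniformity in $\mu$: no single polynomial approximates $1/(p-\mu)$ for every admissible $\mu$, so without the splitting above the constants produced by \eqref{appr_mult} and Theorem \ref{t34} would inevitably depend on $\mu$. Handling the large-$|\mu|$ tail directly via Theorem \ref{norm_th} and the remaining bounded range by a finite covering (justified by the norm-continuity of $g_\mu$ in $\mu$) resolves the issue; the rest is standard Neumann-series bookkeeping.
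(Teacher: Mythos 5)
Your proof takes a genuinely different route from the paper's. The paper applies Putinar's Positivstellensatz directly to the polynomial $q(z,\zbar)=|p(z,\zbar)-\mu|^2-\ka^2+\gamma$, which is strictly positive on $S$, feeds the resulting representation into Lemma \ref{review}, and obtains the operator lower bound $(p(a,a^*)-\mu\one)^*(p(a,a^*)-\mu\one)\ge(\ka^2-\gamma-C'\delta)\one$; Proposition \ref{l31} then yields invertibility and the resolvent bound in one stroke, with uniformity in $\mu$ handled by the trivial large-$|\mu|$ case plus Remark \ref{constr}. You instead construct a polynomial approximate inverse $q\approx(p-\mu)^{-1}$ on $S$ by Stone--Weierstrass, use almost-multiplicativity \eqref{appr_mult} together with Theorem \ref{t34} to show $q(a,a^*)\bigl(p(a,a^*)-\mu\one\bigr)$ is close to $\one$, and invert via a Neumann series, with uniformity in $\mu$ coming from a finite covering of the compact $\mu$-range. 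Both routes ultimately rest on the Positivstellensatz (yours through the proof of Theorem \ref{t34}), so your version is not logically lighter; the paper's is shorter because it skips the approximation and Neumann steps, while yours is closer in spirit to the classical Gelfand-calculus heuristic of ``polynomialize the resolvent''.

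Two points need repair. First, from $\|q(a,a^*)(p(a,a^*)-\mu\one)-\one\|<1/2$ you only obtain a left inverse of $p(a,a^*)-\mu\one$, and in a general $C^*$-algebra a left-invertible element need not be invertible (the unilateral shift is the standard counterexample), so the displayed identity $(p(a,a^*)-\mu\one)^{-1}=[q(a,a^*)(p(a,a^*)-\mu\one)]^{-1}q(a,a^*)$ is not yet justified. The fix is immediate: since $(p-\mu)q=q(p-\mu)$ as polynomials, the same estimate gives $\|(p(a,a^*)-\mu\one)\,q(a,a^*)-\one\|<1/2$, hence $p(a,a^*)-\mu\one$ has both a left and a right inverse and is therefore invertible with the stated bound. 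Second, your assertion that the constants produced by Theorem \ref{t34} are uniform over $\mu$ is not a consequence of Theorem \ref{t34} as stated, which only guarantees some $C(p,\er)$ for a fixed polynomial. You are applying it to the family $s(\cdot\,;\mu)=q_{i(\mu)}(p-\mu)-1$, and uniformity of the constant over that family requires the additional fact, established in the paper via the constructive Remark \ref{constr}, that for a continuous family of polynomials with bounded degree and coefficients the Positivstellensatz representation can be chosen with uniformly bounded data. Your finite-covering device reduces the burden (finitely many $q_i$, compact $\mu$-range), but the appeal to that uniformity remains and should be made explicit.
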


The authors' interest to the subject was drawn by its relation with 
Huaxin Lin's theorem (see \cite{L,FR}). 
It says that if $a$ is an $n\times n$-matrix satisfying \eqref{2}, 
then the distance from $a$ to the set of normal matrices 
is estimated by a function $F(\delta)$ such that
$F(\delta)\to 0$ as $\delta \to 0$ uniformly in $n$. 
This result implies Theorems \ref{norm_th}--\ref{main} 
with $\delta$ replaced by $F(\delta)$ 
in the right hand side. 
By homogenuity reasons, $F(\delta)$ can not decay faster 
than $C\delta^{1/2}$ as $\delta\to0$. 
Therefore this approach gives weaker 
results in terms of power of $\delta$.
Also, our results hold in any unital 
$C^*$-algebra, while the infinite-dimensional versions 
of Lin's theorem require additional index type assumptions on $a$ 
(see, for example, \cite{FR}).

Our proofs are based on certain representation theorems for positive polynomials.
If a real polynomial of $x_1$, $x_2$ is non-negative 
on the unit disk $\{ x : x_1^2 + x_2^2 < 1\}$
then, by a result of \cite{Sch}, it admits a representation
\beq
\label{sos_ref}
\sum_j r_j(x)^2 + \left(1-x_1^2-x_2^2\right) \sum_j s_j(x)^2
\eeq
with real polynomials $r_j$ and $s_j$ (see Proposition \ref{s_c} below).
Representations similar to \eqref{sos_ref} are  usually referred to as {\it Positivstellensatz}. 
We also make use of Positivstellensatz for polynomials positive on the sets \eqref{s}.
The corresponding results for sets bounded by arbitrary algebraic curves
were obtained in \cite{Cas,Put,Sch2,Sch}. 

In order to prove Theorem \ref{main}, we need uniform with respect to $\mu$ estimates for polynomials appearing in Positivstellensatz-type representations. 
In order to obtain the estimates, we use the scheme introduced in \cite{S,NS}.

The authors thank Dr. A. Pushnitski and the referee for valuable comments.

\section{Proofs of the main results}
The proofs of all three theorems consist of two parts. This section is devoted 
to the ``operator-theoretic'' part, which is essentially based on Lemma 
\ref{review}. The ``algebraic'' part is the existence of representations \eqref{sos_cstar}
for the polynomials \eqref{q_def}, \eqref{qq2}, \eqref{qq} 
which is discussed in Section 3.
\label{spec_sect}
\subsection{Positive elements of $C^*$-algebras}
\label{pos}
Recall that a Hermitian element $b\in\A$ is called {\it positive} 
($b\ge 0$) if one of the following two equivalent conditions holds
(see, for example, \cite[\S 1.6]{Dix}):
\begin{enumerate}
\item
$\sigma(b)\subset[0,+\infty).$
\item
$b=h^*h$ for some $h\in\A$.
\end{enumerate}
The set of all positive elements in $\A$ is a cone: 
if $a,b\ge 0$, then $\alpha a+\beta b\ge 0$
for all real $\alpha,\beta\ge 0$. 
There exists a partial ordering on the set of Hermitian elements of $\A$: 
$a\le b$ iff $b-a\ge 0$. 
For a Hermitian $b$,
\beq
\label{positive}
-\|b\|\one\le b\le \|b\|\one
\eeq and, 
moreover, if $0\le b\le \beta\one$, $\beta\in\R$,
then $\|b\|\le\beta$.
The following fact is also well known.

\begin{prop}
\label{l31}
Let $h\in\A$, $\rho >0$.
Then $h^*h \ge \rho^2\one$ if and only if
the element $h$ is invertible and  $\|h^{-1}\| \le \rho^{-1}$.
\end{prop}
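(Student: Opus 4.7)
The strategy is to read the operator inequality $h^*h\ge\rho^2\one$ as a spectral containment via criterion (1) for positivity, and then transfer it through inversion on $h^*h$ using continuous functional calculus on the commutative $C^*$-subalgebra generated by $h^*h$. Both directions rest on the scalar equivalence that on $(0,\infty)$, $t\ge\rho^2$ iff $t^{-1}\le\rho^{-2}$.

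For $(\Leftarrow)$: if $h$ is invertible with $\|h^{-1}\|\le\rho^{-1}$, then $h^*$ is invertible with $(h^*)^{-1}=(h^{-1})^*$, so $h^*h$ is invertible and $(h^*h)^{-1}=h^{-1}(h^{-1})^*$. The $C^*$-identity gives $\|(h^*h)^{-1}\|=\|h^{-1}\|^2\le\rho^{-2}$, and since $(h^*h)^{-1}$ is positive, \eqref{positive} yields $(h^*h)^{-1}\le\rho^{-2}\one$. Inverting this inequality on positive invertibles—the operator form of the scalar equivalence above—gives $h^*h\ge\rho^2\one$.

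For $(\Rightarrow)$: from $h^*h-\rho^2\one\ge 0$, criterion (1) gives $\sigma(h^*h)\subset[\rho^2,+\infty)$, so $h^*h$ is invertible in $\A$ and the spectral mapping theorem yields $\|(h^*h)^{-1}\|\le\rho^{-2}$. One writes $h^{-1}=(h^*h)^{-1}h^*$ for the inverse of $h$, and the $C^*$-identity gives the norm bound $\|h^{-1}\|^2=\|h^{-1}(h^{-1})^*\|=\|(h^*h)^{-1}\|\le\rho^{-2}$.

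There is no real obstacle: the only step beyond direct manipulation is the order-reversing property of inversion on positive invertibles used in $(\Leftarrow)$, which is handled by restricting to the commutative $C^*$-subalgebra generated by $h^*h$ and invoking continuous functional calculus on $t\mapsto 1/t$. The rest is a bookkeeping application of the positivity criteria and the $C^*$-identity recorded in this subsection.
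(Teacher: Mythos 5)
The paper gives no proof of this proposition --- it is stated as ``well known'' --- so there is no argument in the paper to compare against. Your $(\Leftarrow)$ direction is correct. The $(\Rightarrow)$ direction has a genuine gap: you write ``$h^{-1}=(h^*h)^{-1}h^*$ for the inverse of $h$,'' but $(h^*h)^{-1}h^*$ is only a \emph{left} inverse, and invertibility of $h^*h$ does not imply invertibility of $h$ in a general unital $C^*$-algebra. In fact the implication $(\Rightarrow)$ is false as stated: for the unilateral shift $S$ on $\ell^2(\N)$ one has $S^*S=\one\ge\one$, yet $S$ is not invertible.

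The implication becomes true if one adds the hypothesis $hh^*\ge\rho^2\one$ (then $h^*(hh^*)^{-1}$ is a right inverse, so $h$ is two-sided invertible and the norm bound follows as you wrote), or if one assumes a priori that $h$ is invertible. In the paper's only application of the $(\Rightarrow)$ direction (the proof of Theorem \ref{main}), the symmetric bound on $hh^*$ is in fact available: applying \eqref{appr_mult} to $|p-\mu|^2$ with the two factors in either order controls both $(p(a,a^*)-\mu\one)^*(p(a,a^*)-\mu\one)$ and $(p(a,a^*)-\mu\one)(p(a,a^*)-\mu\one)^*$. But that extra information is not recorded in the statement of Proposition \ref{l31}, and your proof inherits the same omission at the step where you assert that $h$ has a (two-sided) inverse.
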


Our proofs use the following simple lemma.

\begin{lemma}
\label{review}
Let $a\in\A$ satisfy \eqref{2}, and let
\beq
\label{sos_cstar}
q=\sum\limits_{j=0}^N r_j^2+\sum_{i=0}^{m-1}\l(\sum\limits_{j=0}^N r_{ij}^2\r) g_i,
\eeq
where $r_j$, $r_{ij}$, $g_i$ are real-valued polynomials of the form \eqref{p_def}.
Assume that $g_i(a,a^*)\ge 0$, $i=0,\ldots,m-1$. Then
$$
q(a,a^*)\ge -C\delta\one
$$
with some non-negative constant $C$ depending on $r_j$, $r_{ij}$, $g_j$.
\end{lemma}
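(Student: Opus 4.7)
The plan is to exploit three facts in sequence: (i) a real-valued polynomial $p$ (in the sense $\bar p = p$) gives a Hermitian element $p(a,a^*)$ by the involutive property of the calculus; (ii) the map $p \mapsto p(a,a^*)$ is approximately multiplicative via \eqref{appr_mult}, with an error $O(\delta)$; (iii) a Hermitian element with norm $\le C\delta$ is automatically $\ge -C\delta\one$ by \eqref{positive}. Combining these, the goal is to write $q(a,a^*)$, up to a Hermitian error of norm $O(\delta)$, as a sum of manifestly positive elements of $\A$.

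The key observation is that in the commutative polynomial ring we have the trivial identity $r_{ij}^2 g_i = r_{ij}\, g_i\, r_{ij}$. So I would first rewrite
\[
q = \sum_{j=0}^N r_j \cdot r_j + \sum_{i=0}^{m-1} \sum_{j=0}^N r_{ij}\, g_i\, r_{ij}
\]
and then apply the almost-multiplicativity \eqref{appr_mult} term by term (twice, for the triple products). This yields
\[
q(a,a^*) = \sum_{j=0}^N r_j(a,a^*)^2 + \sum_{i=0}^{m-1} \sum_{j=0}^N r_{ij}(a,a^*)\, g_i(a,a^*)\, r_{ij}(a,a^*) + E,
\]
where $E$ is a Hermitian element of $\A$ with $\|E\| \le C\delta$, the constant $C$ being a finite combination of the coefficient sums $C(\,\cdot\,,\,\cdot\,)$ from \eqref{appr_mult} together with the norms $\|r_{ij}(a,a^*)\|$ and $\|g_i(a,a^*)\|$. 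Since $\|a\|\le 1$, these norms are bounded by the coefficient $\ell^1$-norms of the polynomials, independently of $a$.

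Now each summand on the right is positive: $r_j(a,a^*)^2 = r_j(a,a^*)^* r_j(a,a^*) \ge 0$ because $r_j(a,a^*)$ is Hermitian, and $r_{ij}(a,a^*)\, g_i(a,a^*)\, r_{ij}(a,a^*) = h^* c\, h$ with $h = r_{ij}(a,a^*) = h^*$ and $c = g_i(a,a^*) \ge 0$ by hypothesis, hence $\ge 0$ as an element of the form $h^* c h$ with $c \ge 0$. Therefore $q(a,a^*) \ge E$, and invoking \eqref{positive} for the Hermitian $E$ gives $E \ge -\|E\|\one \ge -C\delta\one$, which is the claim.

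There is no substantive obstacle: the only thing to watch is the order-of-multiplication trick — writing $r_{ij}^2 g_i$ as the polynomial $r_{ij} g_i r_{ij}$ before inserting $a,a^*$, so that after the perturbation \eqref{appr_mult} the noncommutative analogue still has the symmetric sandwich form $h^* c h$ rather than the useless $h^2 c$. Once that reorganisation is made, the proof is just a triangle inequality plus the two standard $C^*$-algebraic facts from Subsection \ref{pos}.
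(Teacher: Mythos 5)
Your proposal is correct and matches the paper's own argument: both proofs reorganize $r_{ij}^2 g_i$ into the symmetric sandwich $r_{ij}\,g_i\,r_{ij}$, use the almost-multiplicativity \eqref{appr_mult} to show $q(a,a^*)$ differs from $\sum_j r_j(a,a^*)^2 + \sum_{i,j} r_{ij}(a,a^*)\,g_i(a,a^*)\,r_{ij}(a,a^*)$ by a Hermitian element of norm $O(\delta)$, and conclude via \eqref{positive}. The only cosmetic difference is that the paper makes the positivity of each sandwich term explicit by factoring $g_i(a,a^*)=b_i^*b_i$ and writing it as $(b_i r_{ij}(a,a^*))^*(b_i r_{ij}(a,a^*))$, whereas you cite the standard fact $h^*ch\ge 0$ for $c\ge 0$.
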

\begin{proof}
Note that $q$ is real-valued, so $q(a,a^*)$ is self-adjoint. Since $g_{i}(a,a^*)\ge 0$,
we have $g_i(a,a^*)=b_i^* b_i$ for some $b_i\in\mathcal A$. Then
$$
r_{ij}(a,a^*)g_{i}(a,a^*)r_{ij}(a,a^*)=(b_i r_{ij}(a,a^*))^*(b_i r_{ij}(a,a^*))\ge 0.
$$
We also have $r_j(a,a^*)^2\ge 0$. From \eqref{appr_mult}, we have
$$
\|q(a,a^*)-\sum\limits_j r_j(a,a^*)^2-\sum\limits_{i,j}r_{ij}(a,a^*)g_{i}(a,a^*)r_{ij}(a,a^*)\|\le C'\delta,
$$
and now the proof is completed by using \eqref{positive}.
\end{proof}
\subsection{Proofs of Theorems \ref{norm_th}--\ref{main}}
{\it Proof of Theorem }\ref{norm_th}.
Proposition \ref{s_c} below implies that the polynomial
\beq
\label{q_def}
q(z,\zbar)=p_{\max}^2-|p(z,\zbar)|^2
\eeq
admits a representation \eqref{sos_cstar} with $m=1$, $g_0(z,\zbar)=1-|z|^2$ because, by the definition of $p_{\max}$, the polynomial $q$ is non-negative on the unit disk.

Let us apply Lemma \ref{review} to $q$. 
By \eqref{2}, we have $g_0(a,a^*) = \one - aa^* \ge 0$. 
Therefore 
$$
q(a,a^*)\ge -C_1(p) \delta\one
$$
from which, using \eqref{q_def} and \eqref{appr_mult}, we get
$$
p_{\max}^2 \one -p(a,a^*)^* p(a,a^*)\ge -C_2(p)\delta\one,
$$
$$
p(a,a^*)^* p(a,a^*) \leqslant \l(p_{\max}^2 + C_2(p) \delta\r)\one
$$
and
$$
\|p(a,a^*)\| \leqslant p_{\max}+\frac{C_2(p)\delta}{2 p_{\max}}.\mbox{ }\qed
$$

\noindent{\it Proof of Theorem }\ref{2}. 
By Theorem \ref{Putinar}, the polynomial
\beq
\label{qq2}
q(z,\zbar) = p_{\max}^2 + \er p_{\max} - |p(z,\zbar)|^2
\eeq
admits a representation \eqref{sos_cstar} with
\beq
\label{ggg}
g_0(z,\zbar)=1-|z|^2, \quad g_i(z,\zbar)=|z-\lambda_i|^2-R_i^2,\quad i=1,\ldots,m-1,
\eeq
because it is strictly positive on the set $S$. Note that
\beq
\label{sss}
S=\{z\in\C\colon g_i(z,\zbar)\ge 0,\mbox{ }i=0,\ldots,m-1\}.
\eeq

Proposition \ref{l31} and \eqref{a}  
imply 
$$
g_i(a,a^*) = (a-\lambda_i\one)(a-\lambda_i\one)^* - R_i^2\one \ge 0 ,
$$ 
so we can again apply Lemma \ref{review}. 
Using \eqref{appr_mult}, we obtain
$$
q(a,a^*)\ge -C_1 \delta \one,\quad C_1>0,
$$
$$
p(a,a^*)p(a,a^*)^*\le \l(p_{\max}^2 + \er p_{\max} + C_2(p,\er)\delta\r)\one,
$$
and
$$
\|p(a,a^*)\| \le p_{\max}
\sqrt{1+\frac{\er}{p_{\max}}+\frac{C_2(p,\er)\delta}{p_{\max}^2}}
\le p_{\max} + \er + \frac{C_2(p,\er)\delta}{p_{\max}}.\,\,\qed
$$

\noindent{\it Proof of Theorem }\ref{main}. 
Fix $\gamma>0$. 
By Theorem \ref{Putinar}, the polynomial
\beq
\label{qq}
q(z,\zbar)=|p(z,\zbar)-\mu|^2 - \ka^2 + \gamma.
\eeq
also admits a representation \eqref{sos_cstar} with the same $g_i$ given by \eqref{ggg}. This is because, by the definitions of $\mu$ and $\ka$, 
we have $q(z,\zbar)>0$ for all $z\in S$. 
Since $g_i(a,a^*)\ge 0$, Lemma \ref{review} implies
$$
q(a,a^*)\ge -C\delta\one, \quad C>0.
$$
Using \eqref{qq} and \eqref{appr_mult}, we obtain
\beq
\label{p_est}
(p(a,a^*)-\mu\one)^* (p(a,a^*)-\mu\one) \geqslant 
\left(\ka^2 - \gamma-C'\delta\right) \one .
\eeq
Let us choose $\gamma$ and $\delta_0$ such that
$\gamma + C'\delta\le \ka^2/2$.
Now, \eqref{p_est} and Proposition \ref{l31} give
$$
\|(p(a,a^*)-\mu\one)^{-1}\| \leqslant 
\l(\ka^2-\gamma-C'\delta\r)^{-1/2}\leqslant \ka^{-1}+\frac{\gamma}{\ka^2}+\frac{C'\delta}{\ka^2}.
$$
Choosing $\gamma\leqslant \varepsilon \ka^2$, we obtain the required inequality with $\ka^{-2}C'$ instead of $C$.

The constant $C'$, in general, depends on $p, \ka, \gamma$, and $\mu$. 
Let us show that the theorem holds with $C$ independent of $\mu$. 
For $|\mu|\geqslant \|p(a,a^*)\|+\ka$ it is obvious 
as
$$
\l\|(p(a,a^*)-\mu\one)^{-1}\r\|\leqslant 
\frac{1}{|\mu|-\|p(a,a^*)\|}\leqslant \ka^{-1}.
$$
Thus we can restrict the consideration to the compact set
$$
M = \{\mu\in\C\colon |\mu|\leqslant \|p(a,a^*)\|+\ka,\,
\dist(\mu,p(S))\ge\ka\}.
$$
The estimate $q(z,\zbar)\geqslant \gamma$ holds for all $\mu\in M$. 
The number $N$ of the polynomials $r_j$ and $r_{ij}$ as well as their powers and 
coefficients are bounded uniformly on $M$ by Remark \ref{constr}.
Since $C'$ depends only on these parameters, $C$ may be chosen independent of $\mu$.
\qed

\subsection{Corollaries and remarks}
\begin{remark}
As mentioned in the beginning of the section, the proofs rely on the existence 
of representations of the form \eqref{sos_cstar} for certain polynomials. 
In addition, we need continuity of such a representation with respect 
to the parameter $\mu$ to establish Theorem \ref{main}. 
We are also interested in the possibility of explicitly 
computing the constants $C$ and $\delta_0$, 
which may be important in applications. 
It is clearly possible if we have explicit formulae for the polynomials 
in \eqref{sos_cstar}. 
We show below that this can be done in Theorems \ref{2} and 
\ref{main} (see Remark \ref{constr}).
\end{remark}

\begin{remark}
\label{r32}
In general, it is not possible to find a constant $C$ in Theorem \ref{norm_th} 
which would work for all polynomials $p$.
As an example, consider $\A=M_2(\C)$, 
$$
a=\l(\begin{matrix}0&\sqrt{\delta}\\ 0&0 \end{matrix}\r), \quad 0<\delta<1.
$$
It is clear that $a$ satisfies \eqref{2}. 
Let $\er<1$. 
There exists a continuous function $f$ such that 
$f(z)=-1/z$ whenever $|z|\ge \er$ 
and $|f(z)|\le 1/\er$ for $|z|\le 1$. 
There also exists a 
polynomial $q(z,\zbar)$ such that $|q(z,\zbar)-f(z)|\le \er$ for $|z|\le 1$. 
Now, let
$$
p(z,\zbar)=\frac{1}{\er}\l(z+z^2q(z,\zbar)\r).
$$
Then $p_{\max}\le 2 + \er^2$, but 
$p(a,a^*) = a/\er$ and $\|p(a,a^*)\|=\sqrt{\delta}/\er$. 
Taking $\er$ small, we see that \eqref{neum_gen} can not hold 
with a $C$ independent of $p$.
\end{remark}

\begin{prop}
Under the assumptions of Theorem {\rm\ref{t34}}, there exists a constant
$C(p,\er)$ such that
$$
\|\im p(a,a^*)\| \leqslant \max_{z\in S} |\im p(z,\zbar)| + \er + C(p,\er) \delta .
$$
\end{prop}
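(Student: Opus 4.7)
The plan is to reduce the statement to Theorem \ref{t34} applied to a different polynomial. Define
$$
r(z,\zbar) \eqdef \frac{p(z,\zbar) - \bar p(z,\zbar)}{2i} = \im p(z,\zbar),
$$
which is again of the form \eqref{p_def}. An inspection of its coefficients shows that $\bar r = r$, so $r$ is real-valued on $\C$; in particular $|r(z,\zbar)| = |\im p(z,\zbar)|$ everywhere.

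The map $p \mapsto p(a,a^*)$ is linear and satisfies $\bar p(a,a^*) = p(a,a^*)^*$, so substituting $a$ and $a^*$ into $r$ gives
$$
r(a,a^*) = \frac{p(a,a^*) - p(a,a^*)^*}{2i} = \im p(a,a^*).
$$
Applying Theorem \ref{t34} with $r$ in place of $p$ then yields
$$
\|\im p(a,a^*)\| = \|r(a,a^*)\| \le \max_{z \in S} |r(z,\zbar)| + \er + C(r,\er)\,\delta = \max_{z \in S} |\im p(z,\zbar)| + \er + C(p,\er)\,\delta,
$$
where $C(p,\er) \eqdef C(r,\er)$. This is the desired estimate.

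If one prefers a self-contained argument that does not quote Theorem \ref{t34} as a black box, the proof of that theorem can be mimicked almost verbatim, with the simplification that $r(a,a^*)$ is self-adjoint so a two-sided bound suffices. Setting $M \eqdef \max_{z \in S}|\im p(z,\zbar)| + \er$, both polynomials $M \pm r(z,\zbar)$ are strictly positive on $S$ and, by Theorem \ref{Putinar}, admit representations of the form \eqref{sos_cstar} with the $g_i$ of \eqref{ggg}. Lemma \ref{review} then produces constants $C_{\pm}$ with $\pm r(a,a^*) \le M\one + C_{\pm}\,\delta\,\one$, whence $\|\im p(a,a^*)\| \le M + \max(C_+,C_-)\,\delta$. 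Either way, there is no substantial obstacle: the proposition is a direct corollary of the Positivstellensatz plus Lemma \ref{review} machinery already developed for the main theorems.
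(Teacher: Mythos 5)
Your proposal is correct and takes exactly the paper's approach: the paper's proof is the single line ``apply Theorem \ref{t34} to $q(z,\zbar)=\frac{p(z,\zbar)-\overline{p(z,\zbar)}}{2i}$,'' and your main argument is this same reduction, with the helpful (and correct) additional verifications that $\bar r = r$ and $r(a,a^*)=\im p(a,a^*)$ spelled out.
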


\begin{proof}
It suffices to apply Theorem {\rm\ref{t34}} to the polynomial
$q(z,\zbar) = \frac{p(z,\zbar) - \overline{p(z,\zbar)}}{2i}$.
\end{proof}

In other words, if the values of $p$ on $S$
are almost real, then the element $p(a,a^*)$ itself is almost self-adjoint.

\begin{prop}
Under the assumptions of Theorem {\rm \ref{t34}}, 
there exists a constant $C(p,\er)$ such that
\beq
\label{uni1}
\| p(a,a^*)p(a,a^*)^* - \one\| \leqslant 
\max_{z\in S} \l||p(z,\zbar)|^2-1\r| + \er + C(p,\er) \delta,
\eeq
\beq
\label{uni2}
\|p(a,a^*)^*p(a,a^*) - \one\| \leqslant 
\max_{z\in S} \l||p(z,\zbar)|^2-1\r| + \er + C(p,\er) \delta.
\eeq
\end{prop}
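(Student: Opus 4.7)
The plan is to mimic the one-line proof of the preceding proposition, but with the polynomial $(p - \bar p)/(2i)$ replaced by
$$
q(z,\zbar) \eqdef |p(z,\zbar)|^2 - 1.
$$
That is, I would apply Theorem \ref{t34} to this real-valued polynomial $q$, which is of the form \eqref{p_def} and for which the hypotheses of Theorem \ref{t34} are exactly the ones we are given.

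The catch is that $q$ is quadratic in $p$, so unlike in the $\im p$ case one cannot identify $q(a,a^*)$ on the nose with either $p(a,a^*)p(a,a^*)^*-\one$ or $p(a,a^*)^*p(a,a^*)-\one$: the substitution map $p\mapsto p(a,a^*)$ is only almost multiplicative. The first step is therefore to use the identity $p(a,a^*)^*=\bar p(a,a^*)$ together with \eqref{appr_mult} applied to the ordered product $p\cdot\bar p$ to obtain
$$
\|q(a,a^*)+\one - p(a,a^*)\,p(a,a^*)^*\| \le C_1(p)\,\delta,
$$
and, applying \eqref{appr_mult} to the reversed product $\bar p\cdot p$ and using that $\bar p\,p = p\,\bar p$ as elements of $\C[z,\zbar]$ (so that the left-hand sides involve the same $q(a,a^*)$),
$$
\|q(a,a^*)+\one - p(a,a^*)^*\,p(a,a^*)\| \le C_1(p)\,\delta.
$$
The second step is to apply Theorem \ref{t34} to $q$, giving
$$
\|q(a,a^*)\| \le \max_{z\in S}\l||p(z,\zbar)|^2-1\r| + \er + C_2(p,\er)\,\delta.
$$
A single triangle inequality then yields both \eqref{uni1} and \eqref{uni2} with $C(p,\er)=C_1(p)+C_2(p,\er)$.

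There is no real obstacle, since all the work has been done in Theorem \ref{t34}; the only point to keep an eye on is the bookkeeping that $\C[z,\zbar]$ is commutative, so the \emph{same} polynomial $q+1=p\bar p=\bar p p$ serves as the $O(\delta)$-approximation to both orderings $p(a,a^*)p(a,a^*)^*$ and $p(a,a^*)^*p(a,a^*)$, even though those two products are generally distinct in $\A$.
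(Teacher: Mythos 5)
Your proposal is correct and is exactly the paper's one-line argument, just with the bookkeeping written out: apply Theorem \ref{t34} to $q(z,\zbar)=|p(z,\zbar)|^2-1$, then use \eqref{appr_mult} (together with $\bar p(a,a^*)=p(a,a^*)^*$ and the commutativity of $\C[z,\zbar]$) to transfer the bound on $q(a,a^*)$ to both orderings of the product.
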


\begin{proof}
It is sufficient to apply Theorem \ref{t34} to the polynomial 
$q(z,\zbar) = \l|p(z,\zbar)\r|^2-1$ and use \eqref{appr_mult}.\,\qedhere
\end{proof}

\begin{remark}
Denote the right hand side of \eqref{uni1}, \eqref{uni2} by $\gamma$. 
If $\gamma<1$ then 
$$
(1-\gamma)\one \le p(a,a^*)^*p(a,a^*)\le (1+\gamma)\one
$$
and
$$
(1-\gamma)\one\le p(a,a^*)p(a,a^*)^*\le (1+\gamma)\one,
$$
which implies that $p(a,a^*)$ and $p(a,a^*)^*p(a,a^*)$ are invertible. 
The element 
$$
u=p(a,a^*)\l(p(a,a^*)^*p(a,a^*)\r)^{-1/2}
$$
is unitary (because it is invertible and $uu^*=1$) and close to $u$,
$$
\|p(a,a^*)-u\|\le \sqrt{1+\gamma}\l(\frac{1}{\sqrt{1-\gamma}}-1\r)\to 0\quad\text{as}\quad\gamma\to 0.
$$
Thus if the absolute values of $p$ on $S$ are close to 
$1$ then $p(a,a^*)$ is close to a unitary element.
\end{remark}

\begin{defi}
The set
$$
\sigma_{\er}(a)=\{\lambda\in\C\colon \|(a-\lambda\one)^{-1}\|>1/\er\}\cup \sigma(a)
$$
is called the $\er$-{\it{pseudospectrum}} of the element $a\in\A$.
\end{defi}
\noindent Its main properties are discussed, for example, in \cite[Ch.~9]{BD}. 
Note that, under the assumptions of Theorem \ref{main},
$\sigma_{\er}(a) \subset {\cal O}_{\er}(S)$ for all $\er>0$,
where ${\cal O}_{\er}(S)$ is the $\er$-neighbourhood of $S$.
If $a$ is normal then
$$
\sigma_{\ka}(p(a,a^*)) = {\cal O}_{\ka} \l(p(\sigma(a))\r),\quad \ka>0.
$$
The following statement is Theorem \ref{main} reformulated in these terms.
\begin{prop}
\label{pseudo}
Under the assumptions of Theorem {\rm \ref{main}}, for all $\er>0$ and $\ka>0$ 
there exist $C(p,\ka,\er)$ and $\delta_0(p,\ka,\er)$ such that
$$
\sigma_{\ka'}(p(a,a^*))\subset {\cal O}_{\ka}(p(S)),\quad
\forall \delta<\delta_0(p,\ka,\er),
$$
where $(\ka')^{-1} = \ka^{-1}+\varepsilon+C(p,\ka,\er)\delta$.
\end{prop}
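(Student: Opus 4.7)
The statement is essentially a reformulation of Theorem \ref{main}, so my plan is to prove it by contrapositive, just unfolding the definition of pseudospectrum. Concretely, I would argue that any $\mu$ lying outside the $\ka$-neighbourhood $\mathcal{O}_{\ka}(p(S))$ cannot lie in $\sigma_{\ka'}(p(a,a^*))$, once we choose the constants supplied by Theorem \ref{main} in the right way.

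First, fix $\varepsilon>0$ and $\ka>0$, and let $C(p,\ka,\er)$ and $\delta_0(p,\ka,\er)$ be the constants provided by Theorem \ref{main}. Define $\ka'$ by $(\ka')^{-1}=\ka^{-1}+\varepsilon+C(p,\ka,\er)\delta$; note $\ka'>0$ and $\ka'<\ka$. Suppose $\mu\in\C$ satisfies $\dist(\mu,p(S))\ge\ka$; equivalently $\mu\notin\mathcal{O}_{\ka}(p(S))$. I would then apply Theorem \ref{main} directly to this $\mu$: it says that for all $\delta<\delta_0(p,\ka,\er)$,
$$
\bigl\|(p(a,a^*)-\mu\one)^{-1}\bigr\|\le \ka^{-1}+\varepsilon+C(p,\ka,\er)\delta = (\ka')^{-1}.
$$
In particular, the inverse exists, so $\mu\notin\sigma(p(a,a^*))$, and moreover the norm of the resolvent is $\le (\ka')^{-1}$, not strictly greater. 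By the definition of the $\ka'$-pseudospectrum, this means $\mu\notin\sigma_{\ka'}(p(a,a^*))$.

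Taking the contrapositive gives the desired inclusion $\sigma_{\ka'}(p(a,a^*))\subset\mathcal{O}_{\ka}(p(S))$ for all $\delta<\delta_0(p,\ka,\er)$. The only subtlety worth noting is that Theorem \ref{main} needs to be applicable to every such $\mu$ with a single constant $C(p,\ka,\er)$ independent of $\mu$; this is precisely what the last paragraph in the proof of Theorem \ref{main} establishes (compactness of the relevant set of $\mu$ plus Remark \ref{constr}), so nothing extra must be done here. In short, there is no genuine obstacle: the proposition is literally Theorem \ref{main} read in the language of pseudospectra, and the writeup is a one-line contrapositive.
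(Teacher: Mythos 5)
Your proposal is correct and is essentially identical to the paper's own proof: both simply take a $\mu$ with $\dist(\mu,p(S))\ge\ka$, invoke Theorem \ref{main} to bound $\|(p(a,a^*)-\mu\one)^{-1}\|$ by $(\ka')^{-1}$, and conclude $\mu\notin\sigma_{\ka'}(p(a,a^*))$ from the definition of pseudospectrum.
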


\begin{proof}
Assume that $\dist(\mu,p(S)) \geqslant \ka$. 
By Theorem \ref{main},
$\|(p(a,a^*)-\mu\one)^{-1}\| \leqslant (\ka')^{-1}$
and, consequently, $\mu\notin \sigma_{\ka'}\l(p(a,a^*)\r)$.
\end{proof}

\section{Representations of non-negative polynomials}
This section is devoted to a special case of the following theorem, which is 
often called {\it Putinar's Positivestellensatz}. As usual, we denote the ring of 
real polynomials in $n$ variables by $\R[x_1,\ldots,x_n]$.
\begin{theorem}{\rm\cite{Put}}
\label{Putinar}
Let $g_0,\ldots,g_{m-1}\in \R[x_1,\ldots,x_n]$. Let the set
$$
S=\{x\in\R^n\colon g_i(x)\ge 0,\,i=0,\ldots,m-1\}
$$
be compact and nonempty. If a polynomial $p\in \R[x_1,\ldots,x_n]$ 
is positive on $S$ then there exist an integer $N$ and polynomials
$$
r_i, r_{ij}\in\R[x_1,\ldots,x_n], \quad i=0,\ldots,m-1, \ \ j=0,\ldots,N,
$$ 
such that
\beq
\label{sos}
p = \sum\limits_{j=0}^N r_j^2 + 
\sum\limits_{i=0}^{m-1}\l(\sum_{j=0}^N r_{ij}^2\r)g_i.
\eeq
\end{theorem}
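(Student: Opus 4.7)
The plan is to argue by contradiction: if a polynomial $p$ strictly positive on $S$ failed to admit the decomposition~\eqref{sos}, I would construct a point of $S$ at which $p$ is non-positive. Denote by $M$ the \emph{quadratic module} generated by $g_0,\ldots,g_{m-1}$, i.e.\ the collection of polynomials of the form appearing on the right-hand side of~\eqref{sos}. Since $S$ is compact, I fix $R>0$ with $S\subset\{|x|<R\}$ and enlarge the list of generators by the (redundant on $S$) polynomial $g_m(x)=R^2-|x|^2$. This enlargement does not alter $S$ but guarantees that $M$ is \emph{Archimedean}, meaning that for every $q\in\R[x_1,\ldots,x_n]$ there is $N\in\N$ with $N-q\in M$; this follows from $g_m\in M$ by iterated application of the identity $4ab=(a+b)^2-(a-b)^2$. (In the applications of Section~\ref{spec_sect} the generator $1-|z|^2$ is already present, so no such enlargement is required.)

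Next, suppose for contradiction that $p>0$ on $S$ but $p\notin M$. By Zorn's lemma applied to the family of quadratic modules $M'\supseteq M$ with $p\notin M'$, I obtain a maximal such $M^*$. Standard maximality arguments — testing for which $q\in\R[x_1,\ldots,x_n]$ the polynomial $p$ lies in $M^*+q\cdot M^*$ — force the dichotomy $M^*\cup(-M^*)=\R[x_1,\ldots,x_n]$ and make $I\eqdef M^*\cap(-M^*)$ a prime ideal. Consequently $M^*/I$ defines a total ordering on the integral domain $\R[x_1,\ldots,x_n]/I$, which extends uniquely to an ordering of its fraction field $K$.

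The key step — and, I expect, the main obstacle — is to realize this abstract ordered field $K$ concretely inside $\R$. Because $M^*$ is Archimedean, so is the ordering on $K$ over $\R$, and a H\"older-type representation theorem (the Kadison--Dubois theorem for Archimedean quadratic modules) produces an order-preserving $\R$-algebra homomorphism $\chi\colon\R[x_1,\ldots,x_n]\to\R$ with $\chi(M^*)\subset[0,+\infty)$. The point $x^*=(\chi(x_1),\ldots,\chi(x_n))\in\R^n$ satisfies $g_i(x^*)=\chi(g_i)\ge 0$ for every $i$, so $x^*\in S$; on the other hand $p\notin M^*$ combined with $M^*\cup(-M^*)=\R[x_1,\ldots,x_n]$ gives $-p\in M^*$, hence $p(x^*)=\chi(p)\le 0$. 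This contradicts $p>0$ on $S$, so $p\in M$ and \eqref{sos} holds. The Zorn/prime-ideal bookkeeping is fairly routine; the technical core lies in the passage from an abstract Archimedean ordering to a real-valued character, which is the heart of Putinar's argument.
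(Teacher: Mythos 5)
Your proposal is the classical non-constructive proof of Putinar's theorem (essentially Putinar's own argument, building on Cassier): Zorn's lemma to get a maximal quadratic module avoiding $p$, the dichotomy $M^*\cup(-M^*)=\R[x_1,\ldots,x_n]$, passage to an ordered field, and a Kadison--Dubois-type representation theorem to extract a point of $S$. This is a genuinely different route from the paper's. The paper explicitly avoids this approach and gives a \emph{constructive} proof, following the Schweighofer/Nie--Schweighofer scheme, but only for the specific generators \eqref{g_def} relevant to the application: they use a quantitative {\L}ojasiewicz inequality (Lemma \ref{loj}) to reduce positivity on $S$ to positivity on $[-1,1]^2$ (Lemma \ref{t1}), then the Powers--Reznick quantitative P\`olya theorem (Proposition \ref{polya}, Lemma \ref{t2}) to get explicit sums of squares, and finally convert the linear forms $\gamma_i$ back into $g_0$ via \eqref{gamma_u}. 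The constructivity is not a stylistic preference: the proof of Theorem \ref{main} requires the degrees and coefficients in \eqref{sos} to be controlled uniformly as $\mu$ ranges over a compact set (Remark \ref{constr}), and your proof, resting on Zorn's lemma, supplies no such control.

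There is also a genuine gap in your argument as a proof of the theorem exactly as stated. You enlarge the generating set by $g_m(x)=R^2-|x|^2$ to force the Archimedean property, and then prove that $p$ lies in the \emph{enlarged} quadratic module. But the conclusion \eqref{sos} demands a representation using only $g_0,\ldots,g_{m-1}$; your argument produces an extra $\bigl(\sum_j r_{mj}^2\bigr)g_m$ term, and there is no reason this term is redundant. This is precisely the well-known subtlety in Putinar's theorem: compactness of $S$ alone does \emph{not} imply that the quadratic module generated by the $g_i$ is Archimedean, and the Archimedean condition (not mere compactness) is the correct hypothesis. (Strictly speaking the paper's own statement of Theorem \ref{Putinar} shares this imprecision, but it is harmless there since in the application $g_0=1-|x|^2$ is already among the generators, so Archimedeanity holds with no enlargement.) If you keep the argument in its present generality you must either assume Archimedeanity outright or justify why $g_m$ can be removed; in the two-dimensional circular setting of \eqref{g_def} you could simply observe, as the paper does, that $g_0$ already gives the needed bound and drop the enlargement.
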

The first result of this type was proved in
\cite{Cas} for the case $m=1$ with $S$ being a disk.
The proof was not constructive and involved Zorn's Lemma. 
In \cite{Put}, Theorem \ref{Putinar} was proved in a similar way. 
In \cite{S} and \cite{NS}, an alternative proof of Theorem \ref{Putinar} 
was presented with its major part being constructive and based on the results 
of \cite{PR}.

In Section 2, we have used Theorem \ref{Putinar} with the polynomials
\beq
\label{g_def}
g_0(x)=1-|x|^2,\quad g_i(x)=|x-\lambda_i|^2-R_i^2,\quad i=1,\ldots,m-1,
\eeq
where $x=(x_1,x_2)$,  $|x|^2=x_1^2+x_2^2$, $\lambda_i\in \R^2$, and $R_i\in\R$.
Let
\beq
\label{s_def}
S=\{x\in\R^2 : g_i(x)\ge 0,\,i=0,\ldots,m-1\}.
\eeq
As before, the set $S$ is a unit disk with several "holes" 
centred at $\lambda_i$ and of radii $R_i$. 

In this section, we give a constructive proof of Theorem \ref{Putinar} for the polynomials \eqref{g_def}.
It turns out that in this case 
the proof simplifies and can be made completely explicit.

If we replace positivity of $p$ with non-negativity,
then for $m=1$ the result still holds. 

\begin{prop}
\label{s_c}
Let $p\in \R[x_1,x_2]$ be non-negative on the unit disk
$\{x\in \R^2 : |x|\leqslant 1\}$. 
Then for some $N$ it admits a representation
$$
p=\sum\limits_{j=0}^N r_j^2+\l(\sum\limits_{j=0}^N s_j^2\r)\l(1-|x|^2\r),
$$
where $r_j,s_j\in \R[x_1,x_2]$, $j=0,\ldots,N$.
\end{prop}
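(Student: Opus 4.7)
My strategy is to deduce Proposition~\ref{s_c} from Theorem~\ref{Putinar} (Putinar's Positivstellensatz) by an approximation argument passing from strictly positive to non-negative polynomials. For each $\eps > 0$, the polynomial $p + \eps$ is strictly positive on the compact disk $\{x : |x| \le 1\}$, so Theorem~\ref{Putinar} applied with $m = 1$ and $g_0 = 1 - |x|^2$ produces sums of squares $\sigma_0^{(\eps)}, \sigma_1^{(\eps)}$ with
$$
p + \eps = \sigma_0^{(\eps)} + (1 - |x|^2)\,\sigma_1^{(\eps)}.
$$
The aim is then to let $\eps \to 0^+$ and extract a representation of $p$ itself.

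The limit is clean provided one has a uniform degree bound: there exists $D = D(\deg p)$ such that each $\sigma_i^{(\eps)}$ can be written as a sum of squares of polynomials of degree $\le D$, independently of $\eps$. Given such a $D$, the set of polynomials of the form $\sum_j r_j^2 + (1-|x|^2)\sum_j s_j^2$ with all $r_j, s_j$ of degree $\le D$ is a closed convex cone in the finite-dimensional space of polynomials of bounded degree, being the linear image of a product of the closed SOS cone (of degree $\le 2D$) with itself. Coefficient-wise convergence $p + \eps \to p$ together with closedness of this cone then yields a representation of $p$ of the desired form.

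The main obstacle is establishing the uniform degree bound, since in general Putinar-type representations can involve polynomials whose degrees and coefficients blow up as the positive polynomial approaches zero on the set. For the unit disk this can be handled via the constructive/explicit version of Theorem~\ref{Putinar} developed later in the paper (following the scheme of \cite{S,NS,PR}), which tracks the degrees of $r_j, s_j$ in terms of $\deg p$ alone. An alternative, more classical route is induction on $\deg p$ via the Fej\'er--Riesz theorem: the restriction $p(\cos\theta, \sin\theta)$ is a non-negative trigonometric polynomial, hence equals $|f(e^{i\theta})|^2$ for some polynomial $f$, and writing $f = u + iv$ with $u, v \in \R[x_1, x_2]$ yields $p = u^2 + v^2$ on $|x| = 1$. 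Consequently $p - u^2 - v^2$ is divisible by $1 - |x|^2$, the quotient has strictly smaller degree, and the inductive hypothesis finishes the argument; the delicate point along this route is to choose the Fej\'er--Riesz factor so that the quotient is itself non-negative on the disk, so that the induction applies.
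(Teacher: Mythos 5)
The paper itself offers no proof of Proposition~\ref{s_c}: it is cited as a special case of \cite[Corollary 3.3]{Sch}, a nontrivial saturation result for preorders on real algebraic surfaces, and the authors explicitly contrast it with Theorem~\ref{Putinar}, where strict positivity cannot be relaxed once $m>1$ (Theorem~\ref{counter}). So there is no in-paper argument to compare against; what matters is whether your sketch actually proves the statement, and it does not.

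Your first route has a genuine gap. You correctly identify that the approximation $p+\eps$ only yields a representation of $p$ in the limit if the degrees of the $r_j,s_j$ can be bounded independently of $\eps$, and the closed-cone argument (a pointed closed cone whose kernel under the relevant linear map is trivial, so the image is closed) would finish the job once such a bound exists. But the claim that the constructive version of Theorem~\ref{Putinar} in Section~3 ``tracks the degrees of $r_j,s_j$ in terms of $\deg p$ alone'' is false. In Lemma~\ref{t1} the exponent $k$ must satisfy $(2k+1)p^*\ge m c_0 d^2 2^{d+1/2}\|p\|$, and in Lemma~\ref{t2} the P\`olya exponent must exceed roughly $d(d-1)\|\tilde p_2\|/p_*$; both depend on the minimum value on $S$ and blow up as that minimum tends to~$0$. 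Applying the machinery to $p+\eps$ gives $p^*=\eps$, so the degrees diverge as $\eps\to 0^+$ and no uniform $D$ emerges. Whether the quadratic module generated by $1-|x|^2$ is stable (admits degree bounds depending only on $\deg p$) is exactly the hard part, and nothing in this paper establishes it; Scheiderer's argument in \cite{Sch} does not go through degree bounds at all.

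Your second route is closer to a proof one could actually carry out, but the ``delicate point'' you flag is the whole content and is not merely a detail. With $p=|x|^4$, the outer Fej\'er--Riesz factor of $p\big|_{|x|=1}\equiv 1$ is the constant $1$, giving $u=1$, $v=0$, and then
$$
q \;=\; \frac{|x|^4 - 1}{1-|x|^2} \;=\; -\,(1+|x|^2) \;<\;0 ,
$$
so the induction breaks. One must instead pick a non-outer factorization of the right degree (here $u+iv=(x_1+ix_2)^2$ gives $u^2+v^2=|x|^4$ and $q\equiv 0$), and it is not evident how to make such a choice in general so that the quotient stays non-negative on the disk. As written, neither route closes, and the statement really does rest on the external reference.
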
 
Proposition \ref{s_c} is a particular case of 
\cite[Corollary 3.3]{Sch}.
We have used it to obtain the representation \eqref{sos_cstar} for the polynomial \eqref{q_def} in Theorem \ref{norm_th}. Note that, in 
contrast with Proposition \ref{s_c}, the condition $p>0$ on $S$ in Theorem 
\ref{Putinar} cannot be replaced by $p\ge 0$ (see Remark 
\ref{counterexample} below).

\subsection{Constructive proof for the polynomials \eqref{g_def}}

The proposed proof relies on the general scheme introduced 
in \cite{S} and \cite{NS} for Theorem \ref{Putinar}. 
We have made all the steps constructive and also added a slight 
variation, the possibility of which was mentioned in \cite{NS}. 
Namely, instead of referring to results of \cite{S} which use \cite{PR}, 
we directly apply the results from \cite{PR} 
(see Proposition \ref{polya} and Lemma \ref{t2} below).

We need the following explicit version of the Lojasiewicz inequality
(see, e.g., \cite{BCR}). Recall that the angle between intersecting circles 
is the minimal angle between their tangents in the intersection points.

\begin{lemma}
\label{loj}
Let $g_0,\ldots,g_{m-1}$ be the polynomials \eqref{g_def}. 
Assume that $S\neq \varnothing$ and none of the disks 
$\{ x : g_i (x) > 0 \}$ with $i>0$
is contained in the union of the others.
Then for any $x\in [-1,1]^2\setminus S$ 
the following estimate holds:
$$
\dist(x,S)\leqslant -c_0 \min\{g_0(x),\ldots,g_{m-1}(x)\}.
$$
If the circles $S_i=\{ x : g_i (x) = 0 \}$
are pairwise disjoint or tangent, then $c_0 = R_{\min}^{-1}$
where $R_{\min}=\min\limits_{i=0,\dots,m-1} R_i$ with $R_0=1$.
Otherwise, $c_0$ can be chosen as
$$
c_0 = \frac{\sqrt{2} + 1}{R_{\min}^2 \sin(\varphi_{\min}/2)} ,
$$
where $\varphi_{\min}$ is the minimal angle between the pairs of intersecting non-tangent circles $S_i$.
\end{lemma}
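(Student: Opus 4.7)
The plan is, for each $x \in [-1,1]^2 \setminus S$, to construct an explicit point $y \in S$ with $|x-y| \le c_0\,\eta$, where $\eta := -\min_i g_i(x) > 0$. The essential algebraic input is the factorisation $-g_0(x) = (|x|-1)(|x|+1)$ and $-g_i(x) = (R_i - |x-\lambda_i|)(R_i + |x-\lambda_i|)$ for $i \ge 1$: in each case $-g_i(x)$ equals the Euclidean distance from $x$ to the circle $S_i$ (on the ``bad'' side) multiplied by a positive factor bounded below by $R_i \ge R_{\min}$. Consequently, $\dist(x,S_i) \le -g_i(x)/R_{\min}$ whenever $g_i(x) < 0$, which is the basic bridge between the analytic quantity $g_i$ and the geometric one.

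Let $I(x) := \{i : g_i(x)<0\}$. If $|I(x)| = 1$, with $\{i_0\} = I(x)$, I would take $y$ to be the orthogonal projection of $x$ onto $S_{i_0}$; the inequality above gives $|x-y| \le \eta/R_{\min}$, and one then needs to verify that $y$ actually lies in $S$. Under the disjoint/tangent hypothesis this is automatic, since distinct ``bad regions'' do not overlap and at most one constraint is ever violated, yielding the bound $c_0 = R_{\min}^{-1}$ stated in the first case of the lemma.

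In the general case, if still $|I(x)| = 1$ the same projection argument works. Otherwise, pick any two $i,j \in I(x)$: the two ``bad regions'' both contain $x$, so they overlap; together with the non-degeneracy hypothesis this forces $S_i$ and $S_j$ to cross at two points, and I take $p$ to be the one nearer to $x$. Replacing each circle locally by its tangent line at $p$, these two tangents meet at angle $\varphi_{ij} \ge \varphi_{\min}$. For $x$ at distance $r := |x-p|$ in the wedge where both constraints are violated, the perpendicular distance from $x$ to each tangent line is $r\sin(\varphi_{ij}/2)$, so the factorisation above produces a lower bound $-g_i(x), -g_j(x) \ge c_1 R_{\min}\, r \sin(\varphi_{\min}/2)$ for some explicit $c_1>0$. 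Taking $y = p$ then gives $|x-p| \le C\eta/(R_{\min}\sin(\varphi_{\min}/2))$, from which the stated form of $c_0$ follows.

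The hard part will be verifying at each step that the candidate $y$ (projection or intersection point) genuinely lies in $S$ and is not cut off by some third constraint $g_k$ with $k \notin I(x)$ or $k \notin \{i,j\}$; this is precisely what the non-degeneracy hypothesis ``no disk is contained in the union of the others'' is designed to control. It is also the likely origin of the looseness in the quoted constant: the extra power of $R_{\min}^{-1}$ and the factor $\sqrt{2}+1$ arise because one cannot always project straight onto the nearest circle but may have to move a bounded detour to land inside $S$, an overhead controlled uniformly using the diameter $2\sqrt{2}$ of the square $[-1,1]^2$.
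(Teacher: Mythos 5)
The paper explicitly omits the proof of this lemma (``We omit the proof of Lemma \ref{loj} because it is elementary and involves nothing but school geometry''), so there is no proof in the paper to compare your attempt against. Your outline does identify the right ingredients: the factorisations $-g_0(x)=(|x|-1)(|x|+1)$ and $-g_i(x)=(R_i-|x-\lambda_i|)(R_i+|x-\lambda_i|)$, the resulting two-sided comparison between $-g_i(x)$ and $\dist(x,S_i)$, the case split between pairwise disjoint/tangent circles and transversally intersecting ones, and the use of the intersection angle to control distances near corners of $\partial S$. Nevertheless there are two genuine gaps.

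First, in the transversal case your key inequality points the wrong way. You want a lower bound for $-g_i(x)$ via the distance from $x$ to the tangent line $T_p$ of $S_i$ at the intersection point $p$. But for $x$ inside the disk bounded by $S_i$, that disk meets $T_p$ only at $p$, so the perpendicular from $x$ to $T_p$ must cross $S_i$ on the way; hence $\dist(x,S_i)\le\dist(x,T_p)$, not the reverse. Combined with $-g_i(x)\ge R_i\,\dist(x,S_i)$ this gives $-g_i(x)\ge R_i\bigl(\dist(x,T_p)-\text{curvature correction}\bigr)$, and the correction (of size comparable to $|x-p|^2/R_i$) is exactly what has to be controlled to make the argument close and is the likely source of the extra factor of $R_{\min}^{-1}$ in the stated $c_0$. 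As written, the chain ``$\dist(x,T_p)\ge r\sin(\varphi/2)\Rightarrow -g_i(x)\ge c_1R_{\min}r\sin(\varphi_{\min}/2)$'' does not follow, and indeed fails when $|x-p|$ is not small compared with $R_{\min}\sin(\varphi/2)$.

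Second, your claim that at most one constraint is violated in the disjoint/tangent case is not correct in general: if one of the small disks lies entirely outside the unit disk but still meets $[-1,1]^2$ (which the nonredundancy hypothesis does not forbid, since that disk need not be contained in the union of the others), a point $x$ with $|x|>1$ inside that small disk violates both $g_0$ and that $g_i$ even though $S_0$ and $S_i$ are disjoint. More generally, once you have produced a candidate $y$ (a radial projection or an intersection point), you must still verify $y\in S$, i.e.\ that no third circle cuts it off; you acknowledge this as ``the hard part'' but leave it entirely unaddressed, and it is precisely where the nondegeneracy hypothesis must be used. So the proposal is a reasonable roadmap, but it is not yet a proof and does not derive the stated constant.
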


\noindent We omit the proof of Lemma \ref{loj} because it is elementary 
and involves nothing but school geometry.

For the polynomials
$$
q(x)=\sum\limits_{|\alpha|\leqslant d}q_{\alpha} x^{\alpha}\in \R[x_1,\ldots,x_n],
$$
where $\alpha=(\alpha_1,\ldots,\alpha_n)$ is a multiindex, consider the norm
\beq
\label{pol_norm}
\|q\|=\max\limits_{\alpha}|q_{\alpha}|
\frac{\alpha_1!\ldots\alpha_n!}{(\alpha_1+\ldots+\alpha_n)!}.
\eeq
The following proposition is also elementary and is proved in \cite{NS}:
\begin{prop}
\label{elem2}
Let $x,y\in [-1,1]^n$, $q\in \R[x_1,\ldots,x_n]$, and $\deg q=d$.
Then
$$
|q(x)-q(y)|\leqslant d^2 n^{d-1/2} \|q\||x-y|.
$$
\end{prop}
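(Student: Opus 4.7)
The plan is to reduce the bound to a supremum-of-gradient estimate on the cube $[-1,1]^n$ and then carry out a purely combinatorial estimate of each partial derivative in terms of the weighted norm \eqref{pol_norm}.

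First I would write, using the fundamental theorem of calculus along the segment from $y$ to $x$ (which lies in $[-1,1]^n$ by convexity),
$$
q(x)-q(y) = \int_0^1 \nabla q\bigl(y+s(x-y)\bigr)\cdot (x-y)\,ds,
$$
so that $|q(x)-q(y)| \le |x-y|\sup_{t\in[-1,1]^n}|\nabla q(t)|$. Since $|\nabla q(t)|\le \sqrt{n}\,\max_i|\partial_i q(t)|$, it suffices to estimate a single partial derivative uniformly on the cube.

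Next I would fix $i$ and expand $\partial_i q(t)=\sum_\alpha q_\alpha \alpha_i\,t^{\alpha-e_i}$. On $[-1,1]^n$ every monomial is bounded by $1$, and the definition \eqref{pol_norm} gives $|q_\alpha|\le \|q\|\,|\alpha|!/\alpha!$, so
$$
|\partial_i q(t)| \le \|q\|\sum_{|\alpha|\le d}\frac{|\alpha|!}{\alpha!}\,\alpha_i.
$$
The key combinatorial step is the identity
$$
\sum_{|\alpha|=k}\frac{k!}{\alpha!}\,\alpha_i = k\sum_{|\beta|=k-1}\frac{(k-1)!}{\beta!} = k\,n^{k-1},
$$
obtained by the substitution $\beta=\alpha-e_i$ and evaluating the multinomial expansion of $(x_1+\cdots+x_n)^{k-1}$ at $x_1=\cdots=x_n=1$. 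Summing from $k=1$ to $d$ and using that each of the $d$ terms is $\le d\,n^{d-1}$ yields $\sum_{k=1}^d k\,n^{k-1}\le d^2 n^{d-1}$.

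Assembling the pieces, $|\partial_i q(t)|\le d^2 n^{d-1}\|q\|$, hence $|\nabla q(t)|\le \sqrt{n}\cdot d^2 n^{d-1}\|q\| = d^2 n^{d-1/2}\|q\|$, and the mean value bound from the first step delivers $|q(x)-q(y)|\le d^2 n^{d-1/2}\|q\|\,|x-y|$. There is no real obstacle here: the only step requiring a moment's attention is the combinatorial identity, and the slight sharpness of the final exponent $n^{d-1/2}$ (rather than $n^{d-1}$ or $n^d$) is precisely what is bought by splitting $|\nabla q|\le \sqrt n\max_i|\partial_i q|$ rather than using the cruder bound $|\nabla q|\le\sum_i|\partial_i q|$.
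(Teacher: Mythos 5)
Your proof is correct. The paper itself does not reproduce a proof of Proposition~\ref{elem2}, referring instead to \cite{NS}; your mean-value/gradient argument with the combinatorial identity $\sum_{|\alpha|=k}\frac{k!}{\alpha!}\alpha_i = k\,n^{k-1}$ is the standard route and accounts cleanly for each factor ($\sqrt n$ from $|\nabla q|\le\sqrt n\max_i|\partial_i q|$, $d^2 n^{d-1}$ from the coefficient sum) in the constant $d^2 n^{d-1/2}$.
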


\noindent The next proposition, which is a quantitative version 
of P\`olya's inequality, is proved in \cite{PR}.

\begin{prop}
\label{polya}
Let $f\in\R[y_1,\ldots,y_{n}]$ be a homogeneous polynomial of degree $d$. 
Assume that $f$ is strictly positive on the simplex
\beq 
\label{sim}
\Delta_n=\{y\in\R^n\colon y_i\geqslant 0,\mbox{ }\sum_i y_i=1\} .
\eeq
Let $f_*=\min\limits_{y\in \Delta_n} f(y)>0$. 
Then, for any
$
N>\frac{d(d-1)\|f\|}{2 f_*}-d,
$
all the coefficients of the polynomial $(y_1+\ldots+y_n)^N f(y_1,\ldots,y_n)$ are positive.
\end{prop}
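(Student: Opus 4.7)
The plan is to expand $(y_1+\cdots+y_n)^N f(y)$ in the monomial basis and bound each coefficient from below. By the multinomial theorem,
$$
(y_1+\cdots+y_n)^N f(y) = \sum_{|\beta|=N+d} c_\beta\, y^\beta, \qquad c_\beta = \sum_{|\alpha|=d,\,\alpha\le\beta} f_\alpha \binom{N}{\beta-\alpha}.
$$
Using $\binom{N}{\beta-\alpha} = (N!/\beta!)\,\beta^{\underline{\alpha}}$ with the multi-variable falling factorial $\beta^{\underline{\alpha}} := \prod_i \beta_i(\beta_i-1)\cdots(\beta_i-\alpha_i+1)$, this gives $c_\beta = (N!/\beta!)\, S(\beta)$ where $S(\beta) := \sum_{|\alpha|=d} f_\alpha\, \beta^{\underline{\alpha}}$. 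So it suffices to show $S(\beta)>0$ for every $\beta$ with $|\beta|=N+d=:M$.

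Split $S(\beta)=T(\beta)-E(\beta)$ with $T(\beta):=\sum_\alpha f_\alpha \beta^\alpha$ and $E(\beta):=\sum_\alpha f_\alpha(\beta^\alpha-\beta^{\underline{\alpha}})$. Homogeneity of $f$ yields $T(\beta)=M^d f(\beta/M)\ge M^d f_*$, since $\beta/M\in\Delta_n$. To bound $E(\beta)$, rewrite $f_\alpha=\tilde f_\alpha\cdot d!/\alpha!$ with $\tilde f_\alpha:=f_\alpha\,\alpha!/d!$; by \eqref{pol_norm}, $|\tilde f_\alpha|\le \|f\|$. The key observation is that every summand $\tfrac{d!}{\alpha!}(\beta^\alpha-\beta^{\underline{\alpha}})$ is nonnegative, so
$$
|E(\beta)|\le \|f\|\sum_{|\alpha|=d}\tfrac{d!}{\alpha!}\bigl(\beta^\alpha-\beta^{\underline{\alpha}}\bigr)=\|f\|\bigl(M^d-M^{\underline{d}}\bigr),
$$
where the last equality uses the two identities $\sum_\alpha (d!/\alpha!)\beta^\alpha = M^d$ and $\sum_\alpha (d!/\alpha!)\beta^{\underline{\alpha}} = M^{\underline{d}}$ (the second counts ordered $d$-tuples without repetition drawn from an urn with $\beta_i$ items of color $i$). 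Finally, $\prod_{i=0}^{d-1}(1-i/M)\ge 1-d(d-1)/(2M)$ gives $M^d-M^{\underline{d}}\le \tfrac{d(d-1)}{2}M^{d-1}$.

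Combining the two estimates,
$$
S(\beta)\ge M^{d-1}\Bigl(Mf_*-\tfrac{d(d-1)}{2}\|f\|\Bigr),
$$
which is strictly positive precisely when $M=N+d>d(d-1)\|f\|/(2f_*)$, as stated. I expect the main obstacle to be obtaining the clean constant $\tfrac{d(d-1)}{2}\|f\|$ rather than a bound carrying an extraneous dimensional factor: the naive termwise estimate $|E(\beta)|\le \sum_\alpha |f_\alpha|(\beta^\alpha-\beta^{\underline{\alpha}})$ would produce a spurious $\sum_\alpha d!/\alpha!=n^d$ from the multinomial coefficients, and the role of the weighted norm \eqref{pol_norm} is exactly to allow the redistribution $f_\alpha=\tilde f_\alpha\cdot d!/\alpha!$ that, together with the termwise positivity of $\tfrac{d!}{\alpha!}(\beta^\alpha-\beta^{\underline{\alpha}})$, collapses the sum into the single scalar $M^d-M^{\underline{d}}$.
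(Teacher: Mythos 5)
The paper does not actually prove this proposition; it states it and cites \cite{PR} (Powers and Reznick), so there is no in-paper proof to compare against. Your argument is correct, and as far as I can tell it is essentially the Powers--Reznick argument: expand $(y_1+\dots+y_n)^N f$, observe that the coefficient of $y^\beta$ is the positive multiple $\tfrac{N!}{\beta!}S(\beta)$ of $S(\beta)=\sum_{|\alpha|=d} f_\alpha\,\beta^{\underline{\alpha}}$, bound $T(\beta)=f(\beta)$ from below by $M^d f_*$ via homogeneity, and control the error $E(\beta)$ by exploiting the termwise nonnegativity of $\tfrac{d!}{\alpha!}\bigl(\beta^\alpha-\beta^{\underline{\alpha}}\bigr)$ together with the two identities $\sum_\alpha \tfrac{d!}{\alpha!}\beta^\alpha=M^d$ and $\sum_\alpha \tfrac{d!}{\alpha!}\beta^{\underline{\alpha}}=M^{\underline{d}}$ (a Vandermonde count), ending with $1-\prod_{i=0}^{d-1}(1-i/M)\le \tfrac{d(d-1)}{2M}$. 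Your closing remark about the role of the norm \eqref{pol_norm} is exactly right: it is the rescaling $f_\alpha=\tilde f_\alpha\,d!/\alpha!$ that lets the weighted error sum collapse to the dimension-free scalar $M^d-M^{\underline{d}}$, which is the heart of why the Powers--Reznick bound carries no factor of $n$.

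One small wording point: after the final display you say $S(\beta)$ is ``strictly positive precisely when $M>d(d-1)\|f\|/(2f_*)$.'' The displayed lower bound $S(\beta)\ge M^{d-1}\bigl(Mf_*-\tfrac{d(d-1)}{2}\|f\|\bigr)$ shows this threshold is sufficient, not that it is necessary; for the proposition sufficiency is all that is needed, but ``precisely'' overstates the conclusion. Also note, for completeness, that the step $\beta^{\underline{\alpha}}\ge 0$ (and hence $\beta^\alpha-\beta^{\underline{\alpha}}\ge 0$) uses that the $\beta_i$ are nonnegative integers, so that $\beta_i^{\underline{\alpha_i}}=0$ whenever $\beta_i<\alpha_i$; this is automatic here since $\beta$ ranges over exponent multi-indices, but it is worth saying.
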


\label{proof21}

Further on, without loss of generality, we shall be assuming that 
$0\leqslant g_i(x)\le 1$ for all $x\in S$ (if not, we normalize $g_i$ 
multiplying them by positive constants).

\begin{lemma}
\label{t1}
Under the conditions of Theorem {\rm \ref{Putinar}} with $g$ given by \eqref{g_def}, 
let $p^*=\min\limits_{x\in S}p(x)>0$. 
Then
\beq
\label{sums_g}
p(x)-c_0 d^2 2^{d-1/2}\|p\|\sum\limits_{i=0}^{m-1}(1-g_i(x))^{2k}g_i(x)
\geqslant \frac{p^*}{2},\quad \forall x\in [-1,1]^2,
\eeq
where an integer $k$ is chosen in such a way that 
$$
(2k+1) p^* \geqslant m c_0 d^2 2^{d+1/2}\|p\|,
$$ 
and $c_0$ is the constant from Lemma {\rm \ref{loj}}.
\end{lemma}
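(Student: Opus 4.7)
\textbf{Plan for the proof of Lemma \ref{t1}.}
I would split $[-1,1]^2$ into $S$ and its complement and handle each region separately, relying on three ingredients already available: the elementary scalar inequality $(1-t)^{2k}t\le 1/(2k+1)$ on $[0,1]$ (with maximum at $t=1/(2k+1)$), the Lojasiewicz estimate of Lemma \ref{loj}, and the polynomial Lipschitz bound in Proposition \ref{elem2}. Writing $L=d^2 2^{d-1/2}\|p\|$ and $K=c_0 L$ for the coefficient in front of the sum in \eqref{sums_g}, the hypothesis on $k$ translates exactly into the key numerical inequality $Km/(2k+1)\le p^*/2$, which tells me what budget each case is allowed to spend.

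On $S$ the argument is direct: the standing normalization gives $0\le g_i(x)\le 1$, so $(1-g_i)^{2k}g_i\in[0,1/(2k+1)]$, the whole sum is at most $m/(2k+1)$, and $p(x)\ge p^*$ yields left-hand side $\ge p^*-Km/(2k+1)\ge p^*/2$. For $x\in[-1,1]^2\setminus S$ I would pick $j_0$ with $g_{j_0}(x)=\min_i g_i(x)<0$. Lemma \ref{loj} then supplies $y\in S$ with $|x-y|\le c_0|g_{j_0}(x)|$, and Proposition \ref{elem2} gives $p(x)\ge p(y)-L|x-y|\ge p^*-K|g_{j_0}(x)|$. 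The decisive observation is that $g_{j_0}<0$ forces $(1-g_{j_0})^{2k}\ge 1$, so $(1-g_{j_0})^{2k}g_{j_0}\le g_{j_0}$, and subtracting $K$ times this single summand contributes at least $K|g_{j_0}|$, exactly absorbing the Lipschitz loss: $p(x)-K(1-g_{j_0})^{2k}g_{j_0}\ge p^*$. Each of the remaining $m-1$ summands is at most $1/(2k+1)$ (when $g_i\in[0,1]$) or non-positive (when $g_i<0$), so the leftover is at most $K(m-1)/(2k+1)\le p^*/2$ and \eqref{sums_g} follows.

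The hard part, I expect, will be ruling out a large positive summand $(1-g_i)^{2k}g_i$ for an index $i\ne j_0$ with $g_i(x)>1$, since the normalization on $S$ alone does not prevent this outside $S$. For the explicit polynomials \eqref{g_def} I would handle this geometrically: the normalization $c_i=1/\max_S g_i$ in fact forces $g_i\le 1$ on the whole closed unit disk, because $\max_{|x|\le 1}|x-\lambda_i|^2=(1+|\lambda_i|)^2$ is attained at the antipodal boundary point, which lies in $S$; on the corner region $[-1,1]^2\setminus\{|x|\le 1\}$ one has $g_0(x)<0$, so I may take $j_0=0$ there and use the amplification factor $(1-g_0)^{2k}\ge 1$ inside $-K(1-g_0)^{2k}g_0$ to dominate any residual positive contributions. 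A cleaner fall-back is to tighten the normalization to enforce $g_i\le 1$ on all of $[-1,1]^2$, which only inflates $c_0$ by a constant factor and leaves the two-case argument above intact.
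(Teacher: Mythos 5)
Your two-case decomposition and the three ingredients you use --- the scalar inequality $(1-t)^{2k}t\le 1/(2k+1)$ on $[0,1]$, Lemma \ref{loj}, and Proposition \ref{elem2} --- are exactly the paper's proof. On $S$ the paper bounds the sum by $m/(2k+1)$ using $p(x)\ge p^*$; off $S$ it picks the minimizing index, absorbs the Lipschitz loss $K|g_{\min}|$ by the single negative summand $-K(1-g_{\min})^{2k}g_{\min}\ge -Kg_{\min}$, bounds the positive summands by $(m-1)/(2k+1)$, and discards the remaining negative ones. So the core of your plan is correct and matches the paper step for step.

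You are also right to worry about indices $i\ne j_0$ with $g_i(x)>1$ outside $S$: the scalar inequality only holds on $[0,1]$, and the paper's normalization as literally stated (``$0\le g_i\le 1$ on $S$'') does not by itself rule out $g_i>1$ on $[-1,1]^2\setminus S$. The proof silently needs the stronger normalization $g_i\le 1$ on all of $[-1,1]^2$. However, both of your patches for this are flawed. The geometric one fails twice: the antipodal point $-\lambda_i/|\lambda_i|$ need not lie in $S$ (it may be covered by one of the other holes), and even when it does this only controls $g_i$ on the closed disk, while the corners of $[-1,1]^2$ are outside the disk and can give strictly larger values of $|x-\lambda_i|^2$. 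The corner-region patch fails quantitatively and structurally: since $g_0\ge -1$ on $[-1,1]^2$, the ``absorbing'' term $-K(1-g_0)^{2k}g_0$ is at most $K\,2^{2k}$, whereas a single index with $g_i(x)\ge 3$ already contributes at least $K\,2^{2k}g_i(x)$, so absorption is not automatic; moreover, replacing the true minimizer by $j_0=0$ breaks the Lojasiewicz inequality, which is stated for $\min_i g_i$, not for $g_0$. Your stated fallback --- normalize so that $g_i\le 1$ on the whole square $[-1,1]^2$, inflating $c_0$ by a bounded factor --- is the correct repair and is what the paper's argument implicitly requires; with it in place your proof is complete and identical in substance to the paper's.
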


\begin{proof}
Let $x\in S$. 
Then $p(x)\ge p^*$. Due to our choice of $k$, the elementary inequality
\beq
\label{elem1}
 (1-t)^{2k}t<\frac{1}{2k+1},\quad 0\le t\le 1,\quad k\ge 0,
\eeq
implies that the absolute value of the second term in the left 
hand side of \eqref{sums_g} does not exceed $\frac{p^*}{2}$.

Assume now that $x\in [-1,1]^2\setminus S$. 
Let $y\in S$ be such that $\dist(x,y)=\dist(x,S)$. 
Then Proposition \eqref{elem2} and Lemma \ref{loj} yield
\begin{eqnarray}
\nonumber
p(x)\geqslant p(y)-|p(x)-p(y)|\ge p^*-d^2 2^{d-1/2} \|p\| \dist(x,S) \\
\geqslant p^*+c_0 d^2 2^{d-1/2} \|p\|g_{\min}(x),
\label{26a}
\end{eqnarray}
where $g_{\min}(x)$ is the (negative) minimum of the values of $g_i(x)$. 
Note that $(1-g_{\min}(x))^{2k}>1$. 
From \eqref{26a}, we get
\begin{eqnarray*}
p(x)-c_0 d^2 2^{d-1/2} \|p\|(1-g_{\min}(x))^{2k}g_{\min}(x) \\
\geqslant p(x)-c_0 d^2 2^{d-1/2} \|p\| g_{\min}(x)
\geqslant p^*.
\end{eqnarray*}
On the other hand, \eqref{elem1} and the choice of $k$ imply that the terms with $g_i(x)>0$ contribute no more than
$$
\frac{(m-1) c_0 d^2 2^{d-1/2}\|p\|}{2k+1}\leqslant \frac{p^*}{2}
$$
to the sum \eqref{sums_g}. The remaining terms in \eqref{sums_g} with $g_i(x)<0$ may only increase the left hand side.
\end{proof}

\begin{lemma}
\label{t2}
Let $p\in \R[x_1,x_2]$ and $p_*=\min\limits_{x\in [-1;1]^2}p(x)>0$. 
Then, for some $M\in\N$, 
\beq
\label{t2_res}
p = \sum_{|\alpha|\leqslant M} b_{\alpha}
\gamma_1^{\alpha_1}\gamma_2^{\alpha_2}\gamma_3^{\alpha_3}\gamma_4^{\alpha_4}
\eeq
where $b_{\alpha}\geqslant 0$,
\begin{equation}
\label{gam}
\gamma_1(x)=\frac{1+x_1}4,\quad \gamma_2(x)=\frac{1-x_1}4,\quad \gamma_3(x)=\frac{1+x_2}4,\quad \gamma_4(x)=\frac{1-x_2}4.
\end{equation}
\end{lemma}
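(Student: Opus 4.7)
The plan is to translate the representation problem on $[-1,1]^2$ into a tensor-product Bernstein representation on $[0,1]^2$, then iterate the univariate P\'olya--Bernstein consequence of Proposition \ref{polya} in each coordinate, using a small perturbation to preserve strict positivity between the two iterations.

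First, via the change of variables $t_i := 2\gamma_{2i-1} = (1+x_i)/2$, so that $1-t_i = 2\gamma_{2i}$, the claim reduces to: given $\tilde p(t_1,t_2) := p(2t_1-1,2t_2-1) > 0$ on $[0,1]^2$, find $N_1,N_2\in\N$ and $D_{jk}\ge 0$ with
$$
\tilde p(t_1,t_2) = \sum_{j=0}^{N_1}\sum_{k=0}^{N_2} D_{jk}\, t_1^j (1-t_1)^{N_1-j}\, t_2^k (1-t_2)^{N_2-k}.
$$
The representation in the $\gamma_i$'s then follows after absorbing a factor $2^{N_1+N_2}$ into the coefficients; every monomial has total degree $|\alpha|=N_1+N_2=:M$.

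The univariate form of the above is a direct consequence of Proposition \ref{polya} with $n=2$: homogenizing $q(t)>0$ on $[0,1]$ to $f(y_1,y_2) := (y_1+y_2)^{\deg q}\, q(y_1/(y_1+y_2))$ yields $f$ homogeneous and strictly positive on $\Delta_2$, so Proposition \ref{polya} supplies an explicit threshold $N_0=N_0(\deg q,\|q\|,q_*)$ beyond which $q$ admits a non-negative Bernstein expansion of that degree. I would apply this pointwise in $t_2 \in [0,1]$ to the univariate polynomial $t_1 \mapsto \tilde p(t_1,t_2) - \er$, with $\er := p_*/2$. Because the Bernstein change of basis in $t_1$ is linear with $t_2$-independent coefficients, the resulting Bernstein coefficients $C_j^{(\er)}(t_2)$ are polynomials in $t_2$; the explicit P\'olya threshold, the uniform boundedness of the P\'olya norm of $\tilde p(\cdot,t_2) - \er$ on $t_2 \in [0,1]$, and the uniform lower bound $\min_{t_1 \in [0,1]}(\tilde p(t_1,t_2) - \er) \ge p_*/2$ together guarantee that a single $N_1$ works for every $t_2 \in [0,1]$. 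Adding back the trivial expansion $\er = \er\sum_j \binom{N_1}{j} t_1^j (1-t_1)^{N_1-j}$ yields
$$
\tilde p(t_1,t_2) = \sum_{j=0}^{N_1} C_j(t_2)\, t_1^j (1-t_1)^{N_1-j},\qquad C_j(t_2)\ \ge\ \er\binom{N_1}{j}\ >\ 0 \text{ on } [0,1],
$$
so each $C_j$ is a polynomial \emph{strictly} positive on $[0,1]$. Applying the univariate P\'olya--Bernstein statement once more to each $C_j$ produces $C_j(t_2) = \sum_k E_{jk}\, t_2^k(1-t_2)^{N_2^{(j)}-k}$ with $E_{jk}\ge 0$; choosing $N_2 := \max_j N_2^{(j)}$ and multiplying each $C_j$ by $(t_2+(1-t_2))^{N_2-N_2^{(j)}}\equiv 1$ re-expands it in degree $N_2$ with coefficients that are non-negative linear combinations of the $E_{jk}$, producing the desired tensor-product Bernstein representation.

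The principal obstacle is the passage between the two iterations: without the $\er$-perturbation, the first step would only deliver $C_j(t_2)\ge 0$, and such a $C_j$ may genuinely vanish at some point of $[0,1]$, in which case it admits no non-negative Bernstein expansion of any fixed degree and the second iteration would fail. Subtracting $\er$ before the first application and adding it back via its trivial Bernstein expansion shifts every intermediate $C_j$ uniformly off zero and removes this obstacle.
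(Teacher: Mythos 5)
Your proposal is correct, and it takes a genuinely different route from the paper. The paper keeps all four variables together: it sends $y_i\mapsto\gamma_i$ via the homomorphism $\varphi$, homogenizes $p$ to $\tilde p_1$ on $\Delta_4$, observes that positivity is inherited only on the two-dimensional slice $V=\{2y_1+2y_2=2y_3+2y_4=1\}$, and then repairs positivity on the rest of $\Delta_4$ by adding a multiple of $(y_1+y_2+y_3+y_4)^{d-2}\,r(y)$, where $r(y)=2(y_1+y_2-y_3-y_4)^2$ lies in $\ker\varphi$, vanishes on $V$, and dominates $\dist(y,V)^2$. After that, a \emph{single} application of Proposition~\ref{polya} in four variables finishes the job. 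You instead split the problem by coordinate, reducing to the tensor-product Bernstein representation on $[0,1]^2$, and then apply the two-variable (i.e.\ univariate, after homogenizing on $\Delta_2$) P\`olya bound twice in succession: once in $t_1$ with coefficients that are polynomials in $t_2$, and once in $t_2$ to each of those coefficients, using a compatible degree pad at the end. Both arguments ultimately rest only on Proposition~\ref{polya}, and both are fully constructive. The paper's argument is slightly tighter in that it needs one P\`olya application; yours is more elementary in spirit and scales trivially to $[-1,1]^n$ by further iteration.

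One small inaccuracy in your justification: the $\er$-shift is not actually needed to guarantee strict positivity of the intermediate coefficients. Proposition~\ref{polya} already produces \emph{strictly} positive Bernstein coefficients for each fixed $t_2\in[0,1]$ once $N_1$ exceeds the uniform threshold, so each $C_j(\cdot)$ is a polynomial that is strictly positive on $[0,1]$ regardless of the shift, and the second P\`olya application already applies. What the $\er$-shift does buy is an \emph{explicit} uniform lower bound $C_j(t_2)\ge\er\binom{N_1}{j}$, which lets you write down the second P\`olya threshold without appealing to compactness to bound $\min_{t_2}C_j(t_2)$ from below. In other words, it is a quantitative convenience rather than a logical necessity, which is worth stating correctly since the paper places some emphasis on explicit constants (Remark~\ref{constr}).
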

\noindent This lemma was obtained in \cite{PR} for arbitrary convex polyhedra and associated linear functions $\gamma_k$. 
Below we prove it for the square $[-1,1]^2$, because in this particular case the formulae are considerably simpler.
\begin{proof}
Consider the following $\R$-algebra homomorphism
$$
\varphi\colon \R[y_1,y_2,y_3,y_4]\to \R[x_1,x_2],\quad y_i\mapsto \gamma_i(x).
$$
In order to prove the lemma, it suffices to find a polynomial $\tilde{p}\in \R[y_1,y_2,y_3,y_4]$ with 
positive coefficients such that $\varphi(\tilde{p})=p$. 
If $p=\sum\limits_{i+j\leqslant d}p_{ij}x_1^i x_2^j$ and
$$
\tilde{p}_1(y)=\sum_{i+j\le d}2^{i+j}p_{ij}(y_1-y_2)^i(y_3-y_4)^j(y_1+y_2+y_3+y_4)^{d-i-j},
$$
then $\varphi(\tilde{p}_1)=p$ because
$$
\varphi(y_1+y_2+y_3+y_4)=1, \quad 2\varphi(y_1-y_2)=x_1, \quad
2\varphi(y_3-y_4)=x_2.
$$
Let 
$$
V=\{y\in \Delta_4\colon 2y_1+2y_2=2y_3+2y_4=1\} ,
$$ 
where $\Delta_4$ is the simplex \eqref{sim}.
If $y\in V$ then 
$\tilde{p}_1(y)=p(4y_1-1,4y_3-1)\ge p_*$, 
as $(4y_1-1,4y_3-1)\in [-1,1]^2$. 
For an arbitrary $y$, let $y_0\in V$ be such that $\dist(y,y_0)=\dist(y,V)$. 
Then, from Proposition \ref{elem2}, 
\beq
\label{tp1}
\tilde{p}_1(y)\ge \tilde p_1(y_0)-|\tilde p_1(y)-\tilde p_1(y_0)|\geqslant p_*-d^2 2^{2d-1} \|\tilde{p}_1\|\dist(y,V).
\eeq
Let
$$
r(y)=2(y_1+y_2-y_3-y_4)^2.
$$
It is easy to see that $\varphi(r)=0$ and 
$$
r(y)=(2y_1+2y_2-1)^2+(2y_3+2y_4-1)^2, \quad \forall y\in \Delta_4.
$$
If we rewrite the last expression in the coordinates
$\frac{y_1+y_2}{\sqrt{2}}$, $\frac{y_1-y_2}{\sqrt{2}}$, 
$\frac{y_3+y_4}{\sqrt{2}}$, $\frac{y_3-y_4}{\sqrt{2}}$ 
(obtained by two rotations by the angle $\pi/4$), then we get
\beq
\label{d4}
r(y)\ge 8 \dist(y,V)^2,\quad \forall y\in \Delta_4.
\eeq
Let
$$
\tilde{p}_2(y)=\tilde{p}_1(y)+
\frac{2^{4d-6}d^4\|\tilde{p}_1\|^2}{p_*}(y_1+y_2+y_3+y_4)^{d-2}r(y).
$$
We still have $\varphi(\tilde{p}_2)=p$. 
The inequalities \eqref{tp1} and \eqref{d4} imply that
\begin{eqnarray*}
\tilde{p}_2(y) \geqslant p_* - d^2 2^{2d-1} \|\tilde{p}_1\| \dist(y,V)
+ \frac{2^{4d-3} d^4 \|\tilde{p}_1\|^2}{p_*} \dist(y,V)^2 = \\
\frac{2^{4d-3}d^4 \|\tilde{p}_1\|^2}{p_*}{}
\l(\dist(y,V)-\frac{p_*}{d^2 2^{2d-1}\|\tilde{p}_1\|}\r)^2 + \frac{p_*}{2}
\geqslant\frac{p_*}{2}, \quad \forall y\in \Delta_4.
\end{eqnarray*}
Finally, since $\tilde{p}_2$ is homogeneous, Proposition \ref{polya} with
$N>\frac{d(d-1)\|\tilde{p}_2\|}{p_*}-d$ shows that all the coefficients of
$$
\tilde{p}(y)=(y_1+y_2+y_3+y_4)^N\tilde{p}_2(y)
$$ 
are positive. 
Applying the homomorphism $\varphi$ to $\tilde{p}$, 
we obtain the desired representation of $p$.
\end{proof}

\noindent{\it End of the proof of Theorem }\ref{Putinar}.
Let us apply Lemma \ref{t1} to $p$. 
It is sufficient to find a representation of 
the left hand side of \eqref{sums_g},
because the second term is already of the form \eqref{sos}.
By Lemma \ref{t2}, the left hand side of \eqref{sums_g} can be 
represented in the form \eqref{t2_res}. 
Note that $\gamma_i$ can be rewritten as
\begin{equation}
\label{gamma_u}
\frac{1}{4}(1\pm x_{1,2})=\frac18\l((1\pm x_{1,2})^2+g_0(x)+x_{2,1}^2\r).
\end{equation}
Substituting the last equality into \eqref{t2_res}, we obtain the desired 
representation for \eqref{sums_g} and, therefore, for $p$.
\qed
\subsection{Some remarks}
\begin{remark}
\label{constr}
If $g_i$ are given by \eqref{g_def} then, in principle, 
it is possible to write down explicit formulae for the polynomials 
appearing in \eqref{sos}. 
Indeed, assume that
we have a polynomial $p$ such that $p(x)\ge p^* > 0$ for all $x\in S$. 
Then
\beq
\label{p1}
p(x)=\hat{p}(x)+c_0 d^2 2^{d-1/2}\|p\|\sum\limits_{i=0}^{m-1}(1-g_i(x))^{2k}g_i(x),
\eeq
where $k$ is chosen in such a way that 
$(2k+1) p^* \geqslant m c_0 d^2 2^{d+1/2}\|p\|$. 
The second term in the right hand side of \eqref{p1} 
is an explicit expression of the form \eqref{sos}, 
and the coefficients of $\hat p$ can be found from \eqref{p1}. 
From Lemma \ref{t1}, we know that $\hat p(x)\ge p^*/2$ for all $x\in [-1;1]^2$. 
Now it suffices to represent 
$$
\hat p(x)=\sum\limits_{k+l\le \hat d}\hat p_{kl}\,x_1^k x_2^l
$$
in the form \eqref{sos}. 
Consider the following polynomials 
$$
\tilde{p}_1(y)=\sum_{i+j\le \hat d}2^{i+j}\hat p_{ij}(y_1-y_2)^i(y_3-y_4)^j(y_1+y_2+y_3+y_4)^{\hat d -i-j},
$$
$$
\tilde{p}_2(y)=\tilde{p}_1(y)+
\frac{2^{4\hat d -4}\hat d^4\|\tilde{p}_1\|^2}{p^*}
(y_1+y_2+y_3+y_4)^{\hat d -2} (y_1+y_2-y_3-y_4)^2,
$$
and
$$
\tilde{p}(y)=(y_1+y_2+y_3+y_4)^N\tilde{p}_2(y)
\quad \text{where} \quad 
N>\frac{2\hat d (\hat d - 1) \|\tilde{p}_2\|}{p^*}- \hat d.
$$ 
If we replace $y_i$, $i = 1, 2, 3, 4$, 
with $\gamma_i(x)$ given by \eqref{gam} in the definition of $\tilde{p}$,
then we get $\hat p(x)$. 
The coefficients of $\tilde{p}$ are positive. 
Therefore, if we substitute $y_i$ with $\gamma_i$ 
and then apply \eqref{gamma_u}, 
we obtain an expression of the form \eqref{sos} for $\hat p(x)$. 
Combining it with $\eqref{p1}$, we get the desired expression for $p$. 
As a consequence, if we have a continuous family of positive polynomials 
with a uniform lower bound on $S$ and uniformly bounded degrees, then the 
polynomials in the representation \eqref{sos} may also be chosen to be 
continuously depending on this parameter, and also with uniformly bounded degrees.
\end{remark}

\begin{remark}
\label{counterexample}
In \cite{Sch2}, an analogue of Theorem \ref{Putinar} for a non-negative 
polynomial $p$ and $m>1$ was established under some additional assumptions 
on the zeros of $p$. The next theorem 
shows that, in general, Theorem \ref{Putinar} may not be true if $p\ge 0$.
\end{remark}

\begin{theorem}
\label{counter}
Let $g_i$ be defined by \eqref{g_def},
and assume that $\lambda_i \neq \lambda_j$ for some $i$ and $j$. 
Then the polynomial $g_i g_j$ can not be represented in the form \eqref{sos}.
\end{theorem}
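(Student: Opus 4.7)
I proceed by contradiction: assume
\[
g_i g_j \;=\; \sigma_\emptyset \;+\; \sum_{l=0}^{m-1} g_l \,\sigma_l
\]
with each $\sigma_\emptyset$, $\sigma_l$ a sum of squares in $\R[x_1,x_2]$. The strategy is to extract divisibility of the $\sigma_l$'s by $g_i$ (and symmetrically by $g_j$) from generic-point analyses on the two circles $S_i = \{g_i = 0\}$ and $S_j = \{g_j = 0\}$, and then derive a contradiction from the resulting polynomial identity.

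First, I would choose a point $y^* \in S_i$ at which $g_l(y^*) > 0$ for every $l \neq i$; such points form a non-empty open arc of $S_i$ in any non-degenerate configuration. At $y^*$ the identity becomes $0 = \sigma_\emptyset(y^*) + \sum_{l \neq i} g_l(y^*)\,\sigma_l(y^*)$, a sum of non-negatives, so all terms must vanish. Since each $\sigma$ is a sum of squares, every polynomial appearing in its SOS decomposition vanishes at $y^*$, and hence on the entire arc by continuity. Because $g_i = |x-\lambda_i|^2 - R_i^2$ is an irreducible conic in $\R[x_1,x_2]$, any polynomial vanishing on an open subset of $S_i$ is divisible by $g_i$. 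Consequently $g_i^2 \mid \sigma_\emptyset$ and $g_i^2 \mid \sigma_l$ for every $l \neq i$. Writing $\sigma_\emptyset = g_i^2 \tilde\sigma_\emptyset$, $\sigma_l = g_i^2 \tilde\sigma_l$ and dividing the original identity by $g_i$ yields
\[
g_j \;=\; \sigma_i \;+\; g_i B, \qquad B \;:=\; \tilde\sigma_\emptyset + \sum_{l \neq i} g_l\,\tilde\sigma_l,
\]
so that $g_j|_{S_i} = \sigma_i|_{S_i} \geq 0$. A direct parametrisation $y = \lambda_i + R_i(\cos\theta,\sin\theta)$ shows that $g_j|_{S_i}(\theta)$ has minimum $(d-R_i)^2 - R_j^2$, where $d = |\lambda_i - \lambda_j| > 0$. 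Whenever the circles $S_i, S_j$ cross transversely (i.e.\ $|R_i - R_j| < d < R_i + R_j$), this minimum is strictly negative, contradicting $\sigma_i \geq 0$ and closing that case at once.

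For the remaining regime of externally disjoint disks ($d > R_i + R_j$), the symmetric generic-point analysis on $S_j$ equally applies, giving additionally $g_j^2 \mid \sigma_l$ for every $l \neq j$. Exploiting coprimality of $g_i$ and $g_j$, the combined divisibilities yield $g_i^2 g_j^2 \mid \sigma_\emptyset$, $g_i^2 g_j^2 \mid \sigma_l$ for $l \neq i,j$, $g_j^2 \mid \sigma_i$, and $g_i^2 \mid \sigma_j$. Substituting these back into the identity and dividing by $g_i g_j$ produces the polynomial equation
\[
1 \;=\; g_i g_j \hat A \;+\; g_j \hat\sigma_i \;+\; g_i \hat\sigma_j,
\]
with $\hat\sigma_i, \hat\sigma_j$ sums of squares. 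A contradiction follows by inductively comparing top-degree homogeneous parts: the leading contributions of $g_j \hat\sigma_i$ and $g_i \hat\sigma_j$ are both of the non-negative form $|x|^2 \cdot (\mathrm{SOS})$ on the unit sphere, and balancing against the leading part of $g_i g_j \hat A$ forces every top-degree SOS component to vanish; iterating in decreasing degrees collapses the right-hand side to $0$, incompatible with the constant~$1$.

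The principal obstacle is the configuration in which one of the disks strictly contains the other: then the genericity condition fails on one of the two circles, only one-sided divisibility is obtained, and the symmetric degree identity is unavailable. Closing this case appears to require a more delicate order-of-vanishing analysis along $S_j$ (where $g_i < 0$), combining the relation $g_j = \sigma_i + g_i B$ with the constraint $B|_{S_j} \geq 0$ forced by $\sigma_i \geq 0$.
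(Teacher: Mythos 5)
Your divisibility analysis (vanishing of the sums of squares on boundary arcs, hence divisibility by the irreducible conic $g_i$, hence by $g_j$, hence by $g_i g_j$ using coprimality) matches the paper's argument, which packages the Nullstellensatz step as Lemma~\ref{hilbert}. The divergence, and the source of your gap, is in what happens after the reduction to an identity of the shape $1 = g_i A + g_j B$. You branch into three real-geometric regimes (transversal crossing, externally disjoint, nested) and complete only two of them: the crossing case via the explicit minimum $(d-R_i)^2 - R_j^2 < 0$ of $g_j$ on $S_i$, the disjoint case via a top-degree comparison. The degree argument is sound (each term's leading homogeneous part is $|x|^{2k}$ times the leading part of a sum of squares, so non-negative; vanishing of their sum strictly lowers the degree, forcing everything to zero and contradicting the constant $1$), but it is more delicate than necessary and does not address the nested case, which you correctly flag as open: when one disk strictly contains the other, there is no arc of the inner circle on which all other $g_l$ are positive, so the two-sided divisibility needed to reach the final identity is not available by your method.

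The paper closes exactly this hole with a single observation that replaces both your degree argument and your case analysis: Lemma~\ref{empty} shows that the \emph{complex} varieties $S_i(\C)$ and $S_j(\C)$ always intersect as soon as $\lambda_i \neq \lambda_j$, regardless of whether the real circles meet, are disjoint, or are nested. Evaluating the identity $1 = g_i A + g_j B$ at any point of $S_i(\C)\cap S_j(\C)$ gives $1 = 0$ immediately, with no cases at all. This is the decisive idea you are missing; your real-geometric parametrisation of $S_i$ cannot see this intersection precisely in the nested regime. To repair your proof you would either need Lemma~\ref{empty} (in which case the degree argument becomes redundant), or a genuinely new mechanism for the nested case, which the order-of-vanishing approach you sketch does not yet supply.

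A smaller remark: both your proof and the paper's implicitly need $S_i\cap\partial S$ and $S_j\cap\partial S$ to contain open arcs in order to start the divisibility argument; this is a non-degeneracy hypothesis of the kind already imposed in Lemma~\ref{loj}, and is not a point of difference between the two proofs.
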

This result is probably well known to specialists, 
although we could not find it in the literature. 
For reader's convenience, we prove it below.

Let $g_i$ be defined by \eqref{g_def}, and let
\beq
\label{26}
S_i=\{x\in\R^2 : g_i(x)=0\} , \quad
S_i(\C)=\{x\in\C^2 : g_i(x)=0\}.
\eeq

\begin{lemma}
\label{hilbert}
Let $q\in \R[x_1,x_2]$ be a polynomial such that $q(x)=0$ on an open arc of $S_i$. 
Then $g_i\mid q$  {\rm(}that is, $q$ is divisible by $g_i${\rm)}.
\end{lemma}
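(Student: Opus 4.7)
The plan is to first upgrade the vanishing of $q$ from an open arc of $S_i$ to the entire real circle $S_i$, and then extract divisibility by a polynomial division in one of the variables. For the first step, I would parametrize $S_i$ by
\[
\theta \mapsto (\lambda_{i,1} + R_i\cos\theta,\, \lambda_{i,2} + R_i\sin\theta), \qquad \theta\in\R ,
\]
and substitute into $q$ to obtain a trigonometric polynomial $f(\theta)$. This $f$ is real-analytic, and it vanishes on the open interval of $\theta$-values corresponding to the given arc. By the identity principle for real-analytic functions, $f\equiv 0$, so $q$ vanishes on all of $S_i$.

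Once this is in hand, I would observe that $g_i(x)=(x_1-\lambda_{i,1})^2+(x_2-\lambda_{i,2})^2-R_i^2$ is monic of degree $2$ in $x_2$, and perform polynomial division of $q$ by $g_i$ in the ring $\R[x_1][x_2]$:
\[
q(x_1,x_2) = g_i(x_1,x_2)\, h(x_1,x_2) + a(x_1)\, x_2 + b(x_1),
\]
for some $h\in\R[x_1,x_2]$ and $a,b\in\R[x_1]$. Since both $g_i$ and $q$ vanish on $S_i$, so does the linear remainder $r:=ax_2+b$. For each $x_1$ in the open interval $(\lambda_{i,1}-R_i,\,\lambda_{i,1}+R_i)$ the circle contains two points with distinct $x_2$-coordinates $x_2^{\pm}(x_1)$, and the two equations $a(x_1)x_2^{\pm}(x_1)+b(x_1)=0$ together force $a(x_1)=b(x_1)=0$. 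A univariate polynomial that vanishes on an open interval is identically zero, so $a\equiv b\equiv 0$, $r\equiv 0$, and hence $g_i\mid q$.

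The only nontrivial step is the first one, where the openness of the arc is essential: analyticity of $f(\theta)$ is what converts local vanishing into global vanishing on the whole circle. The rest is purely algebraic and uses only two elementary facts — that $g_i$ is monic in $x_2$ and that the real circle has two sheets over generic values of $x_1$. An alternative route via Bezout's theorem in $\mathbb{P}^2(\mathbb{C})$ is possible, using that $g_i$ is (even complex-)irreducible and that $q$ and $g_i$ share infinitely many zeros, but the analytic approach above is more elementary and fits naturally with the rest of the paper.
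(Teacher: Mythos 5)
Your proof is correct, and it takes a genuinely different route from the paper's. The paper complexifies: it views $q$ as a holomorphic function on the complex conic $S_i(\C)$, invokes connectedness of that curve and the identity theorem to get $q\equiv 0$ on all of $S_i(\C)$, and then appeals to Hilbert's Nullstellensatz together with the irreducibility of $g_i$ to conclude $g_i\mid q$. You instead stay entirely over $\R$: the trigonometric parametrization and the identity principle for real-analytic functions upgrade vanishing on an arc to vanishing on the whole real circle $S_i$, and then the Euclidean division in $\R[x_1][x_2]$ (legitimate because $g_i$ is monic in $x_2$) reduces the question to a linear remainder $a(x_1)x_2+b(x_1)$, which is killed by the two-sheet structure of the circle over the open interval of admissible $x_1$. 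Your argument is more elementary — no Nullstellensatz, no irreducibility, no complexification — and is essentially self-contained, which is attractive. The paper's version is shorter once the Nullstellensatz is granted, and it generalizes with no extra work to any irreducible real curve whose complexification is connected, whereas your division-plus-two-sheets step is tailored to conics (or more generally to curves that are monic and of low degree in one variable); for the circles $g_i$ at hand, though, both arguments are equally effective. One small remark: the first step genuinely is needed before the division step, since an arc lying on a single branch of the circle would give only one equation per $x_1$ and would not force $a=b=0$; your real-analytic extension handles this cleanly.
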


\begin{proof}
Consider $q$ as an analytic function on $S_i(\C)$. 
Since the set $S_i(\C)$ is connected, $q\equiv 0$ on the whole $S_i(\C)$. 
Hilbert's Nullstellensatz (see, for example, \cite[Section 16.3]{Var}) 
gives that $g_i\mid q^k$ 
for some integer $k$ (in $\C[x_1,x_2]$ and, consequently, in $\R[x_1,x_2]$). 
As the polynomial $g_i$ is irreducible, we have $g_i\mid q$.
\end{proof}

\begin{lemma}
\label{empty}
Let 
$\lambda_i \neq \lambda_j$.
Then $S_i(\C)\cap S_j(\C)\neq\varnothing$.
\end{lemma}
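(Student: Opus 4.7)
My plan is to reduce the problem to solving the two quadratic equations $|x-\lambda_i|^2=R_i^2$ and $|x-\lambda_j|^2=R_j^2$ simultaneously over $\mathbb{C}^2$, treating the case $i=0$ uniformly by setting $\lambda_0=0$, $R_0=1$ (since $g_0(x)=0$ cuts out the same zero set as $|x|^2=1$). Because both quadratic polynomials have identical leading homogeneous part $x_1^2+x_2^2$, their difference is a genuine \emph{linear} polynomial
\[
L(x)=2(\lambda_{j,1}-\lambda_{i,1})x_1+2(\lambda_{j,2}-\lambda_{i,2})x_2+c,
\]
whose coefficients in $x_1,x_2$ are not both zero, precisely because $\lambda_i\neq\lambda_j$. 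This is the classical ``radical axis'' of the two circles.

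Next I would use $L(x)=0$ to eliminate one variable: after possibly swapping the roles of $x_1$ and $x_2$, I may assume $\lambda_{j,1}\neq\lambda_{i,1}$ and solve for $x_1$ as a real-affine function $x_1=\alpha x_2+\beta$ with $\alpha,\beta\in\mathbb{R}$. Substituting this into $(x_1-\lambda_{i,1})^2+(x_2-\lambda_{i,2})^2=R_i^2$ yields a polynomial equation in the single variable $x_2$ whose leading coefficient is $\alpha^2+1$. The key observation is that $\alpha\in\mathbb{R}$, so $\alpha^2+1>0$; hence the resulting equation is a genuine quadratic in $x_2$.

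The fundamental theorem of algebra then supplies at least one (in fact two, counted with multiplicity) complex root $x_2\in\mathbb{C}$, and the corresponding $x_1=\alpha x_2+\beta\in\mathbb{C}$ produces a point $x\in\mathbb{C}^2$ satisfying both $g_i(x)=0$ and $g_j(x)=0$, which exhibits the desired element of $S_i(\mathbb{C})\cap S_j(\mathbb{C})$. I do not foresee a real obstacle: the only thing worth checking carefully is that the quadratic coefficient of the eliminated equation is nonzero, and this is immediate because we worked with real $\alpha$. An alternative route would be Bezout's theorem applied to the two conics in $\mathbb{CP}^2$, but one would still need to argue that the four projective intersection points are not all absorbed by the circular points $(1:\pm i:0)$ at infinity, which is no simpler than the direct computation above.
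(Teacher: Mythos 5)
Your proof is correct and follows essentially the same route as the paper: subtract the two circle equations to get a linear equation (solvable because $\lambda_i \neq \lambda_j$), substitute back, and solve the resulting non-degenerate quadratic over $\C$. The one place where you add useful detail that the paper omits is in explaining \emph{why} the resulting quadratic is non-degenerate, namely that the leading coefficient is $\alpha^2+1$ with $\alpha\in\R$, hence strictly positive.
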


\begin{proof}
Let the circles $S_i$ and $S_j$ be given by the equations
$$
(x_1-a_1)^2+(x_2-a_2)^2=R_1^2,\quad (x_1-b_1)^2+(x_2-b_2)^2=R_2^2.
$$
Subtracting one from the other, we get a system of a linear and 
a quadratic equation. 
The linear one is solvable because $\lambda_i\neq \lambda_j$. 
Substituting the solution into the quadratic equation, we reduce it to 
a non-degenerate quadratic equation in 
one complex variable, which also has a solution.
\end{proof}

\noindent{\it Proof of Theorem {\rm \ref{counter}}.}
Assume that $p=g_i g_j$ satisfies $\eqref{sos}$.
The left hand side of $\eqref{sos}$ vanishes on the set $S_i \cap \partial S$. 
All the terms $r_k^2$ and $r_{kl}^2 g_k$ in the right hand side of \eqref{sos}
are non-negative on $S_i \cap \partial S$, and therefore are equal to zero 
on this set.
By Lemma \ref{hilbert}, they all are multiples of $g_i$.
Similarly, all the terms in the right hand side are multiples of $g_j$. 
Therefore, $g_i\mid r_k$, $g_j\mid r_k$, and $g_i^2 g_j^2\mid r_k^2$.

Since the polynomials $g_k$ and $g_i$ are coprime for all  $k\neq i$,
we have $g_i^2 \mid r_{kl}^2$ for $k\neq i$
and $g_j^2\mid r_{kl}^2$ for $k\neq j$.
Thus any term in the right hand side of \eqref{sos} is a multiple of either 
 $g_i^2 g_j$ or $g_i g_j^2$. 
Dividing \eqref{sos} by
$g_i g_j$, we see that the left hand side is identically equal to $1$,
and the right hand side vanishes on the intersection
$S_i(\C)\cap S_j(\C)$ which is nonempty by Lemma \ref{empty}.
This contradiction proves the theorem. \qed


\end{document}